\newcommand{\QI}{q}
\newcommand{\E}{\mathbbm{E}}
\renewcommand{\P}{\mathbbm{P}}
\newtheorem{conjecture}{Conjecture}
\newtheorem{lemma}{Lemma}
\newtheorem{prop}{Proposition}
\newtheorem{corol}{Corollary}
\title[Processor Sharing Queue with batches]{{On the sojourn time of a batch in the $M^{[X]}/M/1$ Processor Sharing Queue}}
\author[F. Guillemin]{Fabrice Guillemin}
\address[F. Guillemin]{Orange Labs Networks Lannion, 2 avenue Pierre Marzin, 22307 Lannion Cedex, France}
\author[A. Simonian]{Alain Simonian}
\author[R. Nasri]{Ridha Nasri}
\address[A. Simonian and R. Nasri]{ Orange Labs, DATA-IA, Orange Gardens, 44 avenue de la République, CS 50010, 92326 Châtillon Cedex,  France}
\author[V. Quintuna]{Veronica Quintuna Rodriguez}
\address[V. Quintuna]{Orange Labs Networks Lannion, 2 avenue Pierre Marzin, 22307 Lannion Cedex, France}
\email{\{first\_name.last\_name\}@orange.com}
\begin{document}
%%%%%%%%%%%%%%%%%%%%%%%%%%%%%%%%%%%%%%%%%%%%%%%%%%%%%%

\date{Version of \today}

%%%%%%%%%%%%%%%%%%%%%%%%%%%%%%%%%%%%%
\begin{abstract}
%%%%%%%%%%%%%%%%%%%%%%%%%%%%%%%%%%%%%
In this paper, we analyze the sojourn of an entire batch in a processor sharing  $M^{[X]}/M/1$ processor queue, where geometrically distributed  batches arrive according to a Poisson process and jobs require exponential service times. By conditioning on the number of jobs in the systems and the number of jobs in a tagged batch, we establish recurrence relations between conditional sojourn times, which subsequently allow us to derive a partial differential equation for an associated bivariate generating function. This equation involves an unknown generating function, whose coefficients can be computed by solving an infinite lower triangular linear system. Once this unknown function is determined, we  compute the Laplace transform and the mean value of the sojourn time of a batch in the system.

%%%%%%%%%%%%%%%%%%%%%%%%%%%%%%%%%%%%%
\end{abstract}
%%%%%%%%%%%%%%%%%%%%%%%%%%%%%%%%%%%%%

\keywords{Batch $M/M/1$ queue; Processor sharing; Sojourn time; Laplace transform; Infinite linear system.}

\maketitle

%%%%%%%%%%%%%%%%%%%%%%%%%%%%%%%%%%%%%%%%%%%%
%%%%%%%%%%%%%%%%%%%%%%%%%%%%%%%%%%%%%%%%%%%%

\section{Introduction}
\label{Sec:Intro}

In this paper, we consider an $M^{[X]}/M/1$ queue, where batches (or bulks) of jobs arrive according to a Poisson process with rate $\lambda$ and individual jobs require exponential service times with mean $1/\mu$; the probability that the  size $B$ of a batch is equal to $b$ is given by $q-b$ (i.e., $\P(B=b)=q_b$) for some sequence of positive real numbers $(q_b)$ such that $\sum_{b=1}^\infty q_b=1$. Even if we establish some properties of the system for an arbitrary batch size, we shall mainly consider the case when $q_b=(1-q)q^{b-1}$ for some $q\in (0,1)$, i.e., batches are geometrically distributed. We shall further assume that the service discipline is Processor Sharing. This means that the service capacity is equally shared among all jobs present in the system. The queuing system under consideration is referred to as $M^{[X]}/M/1$-PS queue.

The number of jobs in the system is well studied in the literature and can be found in standard textbooks \cite{grossharris,Klein0}. The analysis of sojourn time of jobs is much more difficult as it involves complex correlations between the sojourn times of all jobs in the system. The mean waiting time of a job in the system was studied by Kleinrock \emph{et al} in \cite{Klein71} for arbitrary batch size probability distributions. The authors notably established an integral equation for the sojourn time of a tagged job conditioned on the service time of this  job. For the specific case of geometric batch size, the complete distribution of the sojourn time of a job has been derived in \cite{GQS}.

In this paper, we consider the sojourn time $\Omega$ of a batch in the system, i.e., the time elapsed between the arrival of the batch and the departure of the last job of the batch. This quantity appears as relevant performance parameter when considering the execution of batch of jobs in a cloud system (for instance virtualized network functions), see for instance \cite{veronica2}. In this context, the execution time of an entire batch is critical when considering real time virtualized network functions such as in the case of cloud radio access networks \cite{cloudran}. See also \cite{ayesta, bansal} for other applications.

The quantity $\Omega$ is defined as follows: Given a tagged batch with size $B = b$, $b \geqslant 1$, the sojourn time $\Omega$ equals the maximum
\begin{equation}
\Omega = \max_{1 \leqslant k \leqslant b} W_k
\label{defWbar}
\end{equation}
of the sojourn times $W_k$, $1 \leqslant k \leqslant b$, of the jobs of the tagged batch. Throughout this paper, the service rate $\mu$ will be normalized to 1, so that the arrival rate $\lambda$ is set to $\rho$ with 
$\rho \mathbb{E}(B) < 1$; under this condition, the system is stable.

Because of the correlations between the sojourn times of the various jobs of a batch, the study of the random variable $\Omega$ reveals utmost complex. An approximation has been developed in \cite{itc31} by considering the residual busy period after a tagged batch arrival and by assuming that the jobs of the tagged batch leave the system  at random among those jobs of the residual busy period. By labeling jobs from 1 to the number of jobs in the remaining busy period, by sampling a number of jobs equal to the size of the tagged batch and by computing the greatest index of those jobs, we can estimate the departure time of the last job of the batch and then the batch sojourn time. Even if this approximation gives satisfactory results when compared to simulations, the asymptotic behavior of the batch sojourn time is overestimated. The prefactor of the exponential decay is polynomial with order 3/2 while the prefactor of the exponential decay in the case of a job is subexponential. This may indicate that the proposed approximation is too coarse.

The objective of this paper is to compute the Laplace transform of the random variable $\Omega$ as well as its mean value. For this purpose, we adopt the following strategy: We consider a tagged batch arriving in the system and we introduce the random variables $\Omega_{n,b}$, where $n$ is the number of jobs in the system upon the tagged batch arrival and $b$ is the number of jobs in the batch. We first establish an infinite linear system satisfied by the Laplace transforms of the random variables $\Omega_{n,b}$ and then a partial differential equation (PDE) satisfied by a bivariate generating function associated with these Laplace transforms.

The PDE under consideration involves an unknown univariate generating function. The determination of this function can performed by using analyticity conditions at the origin. It turns out that the coefficients of the unknown generating function satisfy an infinite linear lower triangular system, which involves hypergeometric polynomials and which can be solved by using the results in \cite{ridha}. Once the unknown generating function is determined, the Laplace transform as well as the mean value of the random variable $\Omega$ can be computed.

The organization of this paper is as follows: In Section~\ref{Sec:Prob} we establish the recurrence relations  satisfied by the Laplace transforms of the random variables $\Omega_{n,b}$, $n \geq 0, b\geq 1$. By using this differential system, we derive in Section~\ref{Sec:PDE} a PDE satisfied by a bivariate generating function associated with those Laplace transforms when the batch size is geometric. The resolution of this PDE is performed in Section~\ref{Sec:Resol}. The Laplace transform and the mean value of the batch sojourn time $\Omega$ is eventually computed in Section~\ref{Sec:Batch}.  Concluding remarks are presented in Section~\ref{Sec:Conclusion}. The proofs of some technical results are deferred to the Appendix.

\section{Notation and fundamental recurrence relations}
\label{Sec:Prob}

As mentioned in the Introduction, we consider for  $n \geq 0 $ and $b \geq 1$,   the random variable $\Omega_{n,b}$, which is the sojourn time of a tagged batch in the queue, given that 
\begin{itemize}
\item  there are $n \geqslant 0$ jobs in the  queue at the  arrival instant of the tagged batch,
\item and the size of the tagged batch is equal to  $b \geqslant 1$.
\end{itemize}
We denote by $e_{n,b}$ the probability density function of sojourn time $\Omega_{n,b}$, that is, in the sense of distributions,
$$
e_{n,b}(x) = \frac{d}{dx} \mathbb{P}(\Omega_{n,b} \leq x), 
\qquad x \in \mathbb{R}^+.
$$
As $\Omega_{n,b} > 0$ almost surely (since the sojourn time includes the non-zero service times of the jobs of the tagged batch), we note that 
\begin{equation}
e_{n,b}(0) = 0, \quad n \geqslant 0, \; b \geqslant 1.
\label{PW1}
\end{equation}
Finally, define  the Laplace transform
$$e^*_{n,b}(s) \stackrel{def}{=} \mathbb{E}(e^{-s \Omega_{n,b}}). $$
These Laplace transforms can  be used to compute the Laplace transform of the sojourn time $\Omega$ of an arbitrary batch since
$$
\E\left(e^{-s \Omega}\right) =\sum_{n=0}^\infty \sum_{b=1}^\infty e^*_{n,b}(s) \P(N=n)\P(B=b).
$$

In this section, we establish the fundamental recurrence relations satisfied by the Laplace transforms $e^*_{n,b}(s)$ for $n \geq 0$ and $b \geq 1$.

\begin{prop}
{The Laplace transforms $e^*_{n,b}(s)$, $n \geqslant 0$, $b \geqslant 1$, verify the recurrence relations for $n \geq 0 $ and $b \geq 1$}
\begin{multline}
e^*_{n,b}(s) =  \frac{\mathbbm{1}_{\{b\geq 2\}} b e^*_{n,b-1}(s) +\mathbbm{1}_{\{b=1\}}}{(n+b)(s+\rho+1)} + 
\frac{1}{s+\rho+1}\frac{n}{n+b} \, e^*_{n-1,b}(s)   \\ 
+ \frac{\rho}{s + \rho + 1} 
\sum_{m \geqslant 1} q_m \, e^*_{n+m,b}(s), \qquad \mbox{for } \Re(s) \geq 0,
\label{E*}
\end{multline}
with the convention $e^*_{-1,b}(s)\equiv 0 $ for all $b \geq 1$.
\end{prop}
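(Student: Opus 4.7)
The plan is to derive (\ref{E*}) by conditioning on the first event that occurs after the tagged batch joins the system and invoking the memoryless property of the exponential distribution.

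Fix $n\geq 0$ and $b\geq 1$, and consider the instant just after the tagged batch has joined. There are then $n+b$ jobs in the system. Since the service discipline is Processor Sharing with normalized rate $\mu=1$, each job is served at rate $1/(n+b)$, so the aggregate completion rate equals $1$; independently, new batches arrive at rate $\rho$. Hence the elapsed time $T$ until the next event (a batch arrival or a single job departure) is exponential with rate $\rho+1$, so that $\E(e^{-sT})=(\rho+1)/(s+\rho+1)$ for $\Re(s)\geq 0$.

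Next, I would classify the first event and its conditional probability:
\begin{itemize}
\item[(i)] a fresh batch of size $m$ arrives, with probability $\rho q_m/(\rho+1)$; the state becomes $(n+m,b)$;
\item[(ii)] a non-tagged job completes, with probability $\dfrac{1}{\rho+1}\cdot\dfrac{n}{n+b}$; the state becomes $(n-1,b)$;
\item[(iii)] a tagged job completes, with probability $\dfrac{1}{\rho+1}\cdot\dfrac{b}{n+b}$. If $b\geq 2$, the state becomes $(n,b-1)$ and the batch's sojourn continues; if $b=1$, the whole batch has left and the sojourn time ends exactly at $T$.
\end{itemize}
By the memoryless property of the exponential service and inter-arrival times, the residual batch sojourn time after the event is, in cases (i), (ii), and (iii) with $b\geq 2$, independent of $T$ and distributed respectively as $\Omega_{n+m,b}$, $\Omega_{n-1,b}$, and $\Omega_{n,b-1}$. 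Writing $\Omega_{n,b}\stackrel{d}{=}T+\tilde\Omega$, with $\tilde\Omega=0$ on the absorbing subcase $b=1$ of (iii), and taking Laplace transforms using independence yields
\[
e^*_{n,b}(s)=\frac{\rho+1}{s+\rho+1}\left[\frac{\rho}{\rho+1}\sum_{m\geq 1}q_m e^*_{n+m,b}(s)+\frac{1}{\rho+1}\frac{n}{n+b}\,e^*_{n-1,b}(s)+\frac{1}{\rho+1}\frac{b}{n+b}\,A_{n,b}(s)\right],
\]
where $A_{n,b}(s)=e^*_{n,b-1}(s)$ if $b\geq 2$ and $A_{n,1}(s)=1$ (the Laplace transform of the Dirac mass at $0$). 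Simplifying the prefactors gives exactly (\ref{E*}); the convention $e^*_{-1,b}(s)\equiv 0$ just absorbs the vanishing of the non-tagged-departure term when $n=0$.

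The only subtle point, and essentially the sole place where care is needed, is the $b=1$ case in (iii): the departure of the unique tagged job terminates the batch's sojourn, so it contributes the deterministic indicator term $\mathbbm{1}_{\{b=1\}}$ rather than a factor $e^*_{n,0}(s)$ (which is not defined). All remaining steps are a direct exponential-clocks computation, and no additional analytic machinery is required beyond the memoryless property.
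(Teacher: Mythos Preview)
Your argument is correct and is essentially the same as the paper's: both condition on the first event after the tagged batch arrival, use the exponential clock with rate $1+\rho$, and split into the three cases (tagged departure, non-tagged departure, new batch arrival) before taking Laplace transforms. If anything, your treatment of the absorbing subcase $b=1$ is slightly more explicit than the paper's, which simply writes $\Omega_{n,b}\stackrel{d}{=}X_{1+\rho}+\Omega_{n,b-1}$ and implicitly relies on the convention $\Omega_{n,0}\equiv 0$.
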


\begin{proof}
Consider a tagged batch arriving at some initial time while the queue contains $N = n \geqslant 0$ jobs, and with size 
$b \geqslant 1$. The variable $\Omega_{n,b}$ then satisfies
\begin{equation}
\Omega_{n,b}\stackrel{d}{=} 
\left\{
\begin{array}{ll}
X_{1+\rho} + \Omega_{n,b-1} \quad \quad \quad \; \; \;  
\mathrm{with \; probability} \quad 
\displaystyle \frac{1}{1+\rho} \times \frac{b}{n+b},
\\ \\
X_{1+\rho} + \Omega_{n-1,b} \quad \quad \quad \; \; \; \mathrm{with \; probability} \quad 
\displaystyle   \frac{1}{1+\rho} \times \frac{n}{n+b},
\\ \\
X_{1+\rho} + \Omega_{n+m,b} \quad \quad \quad \; \; 
\mathrm{with \; probability} \quad 
\displaystyle \frac{\rho}{1+\rho} \times q_m, \; \; m \geqslant 1,
\end{array} 
\right.
\label{OmegaNB}
\end{equation}
where $\stackrel{d}{=}$ denotes equality  in distribution and 
$X_{1+\rho}$ is an exponentially  distributed random variable with mean  $1/(1 + \rho)$. To prove the above equality, observe that after the arrival time of the tagged batch, the next event to occur can be either 
\begin{enumerate}
\item a departure due to the service completion of some job in the queue 
(with probability $1/(1+\rho) $). In this first case, 
   \begin{itemize}
	 \item[-] the probability that the service of a job belonging to the tagged batch is completed is equal to $b/(n+b)$ (since $n$ jobs were present at the arrival time of the tagged batch, which has brought a total number of $b$ jobs), hence $\Omega_{n,b} \stackrel{d}{=} X_{1+\rho} + \Omega_{n,b-1}$;  
 
	 \item[-] the probability that this service completion does not occur for any job of the tagged  batch  equals $n/(n+b)$ and we have 
$\Omega_{n,b} \stackrel{d}{=} X_{1+\rho} + \Omega_{n-1,b}$;
   \end{itemize}

\item or the arrival of new batch (with probability 
$ \rho/(1 + \rho)$) with some size $m \geqslant 1$ 
(with probability $q_m$). In this case, the corresponding sojourn time of the tagged batch 
 equals $\Omega_{n,b} \stackrel{d}{= } X_{1+\rho} + \Omega_{n+m,b}$, due to the memory-less property for the service times of all jobs. 
\end{enumerate}

Taking Laplace transforms in  Equation~(\ref{OmegaNB}) immediately yields Equation~\eqref{E*}.
\end{proof}

An explicit solution to the infinite system~(\ref{E*}) does not seem affordable for any distribution $(q_b, b \geqslant 1)$ of the batch size. In the following, we investigate the case of a geometric distribution of the batch size, that is $q_b= (1-q) q^{b-1}$ for some $q\in (0,1)$ such that $1-\rho-q>0$ to ensure the stability of the system. 
 
%%%%%%%%%%%%%%%%%%%%%%%%%%%%%%%%%%%%%%%%%%%%%%%%%%%%%%%%%%%%%%%%%%%%%%%
%%%%%%%%%%%%%%%%%%%%%%%%%%%%%%%%%%%%%%%%%%%%%%%%%%%%%%%%%%%%%%%%%%%%%%%

\section{Partial differential equation for geometric distribution of the batch size}
\label{Sec:PDE}

Let $\mathbb{D} = \{u \in \mathbb{C}, \, \vert u \vert < 1\}$  denote the unit disk in the complex plane and consider the bivariate generating function fo
\begin{equation}
  E(s;u,v) = \sum_{n=0}^\infty\sum_{b=1}^\infty e^*_{n,b}(s) u^n v^b  ,
\end{equation}
defined for $(u,v)\in \mathbb{D}^2$ and $\Re(s) \geq 0$, since by definition $e^*_{n,b}(s) \leq 1$.

Define the quadratic polynomial 
\begin{equation}
P(s;u) = u^2 - (s + 1 + \varrho + \QI)u + s \QI + \varrho + \QI
\label{defPTheta}
\end{equation}
in variable $u$. Recall (\cite{GQS}, Section 4.1) that $P(s,\cdot)$ has two roots $U^-(s)$ and $U^+(s)$, which verify the inequalities 
\begin{equation}
\forall \; s > 0, \qquad q < U^-(s) < 1 < U^+(s).
\label{INEQ-U}
\end{equation}

Furthermore, we consider the function change $E \mapsto F$ where $F$ is defined by
\begin{equation}
F(s;u,v) = 
\left\{
\begin{array}{ll}
\displaystyle \frac{E(s;u,v) - E(s;\QI,v)}{u-\QI}, & \Re(s) \geqslant 0, \; u \in \mathbb{D} \setminus \{\QI\}, \; v \in \mathbb{C},
\\ \\
\displaystyle \displaystyle \frac{\partial E}{\partial u}(s;\QI,v), &
\Re(s) s \geqslant 0, \; u = \QI,  \; v \in \mathbb{C}.
\end{array} \right.
\label{defFTheta}
\end{equation}
By definition, function $F$ is clearly analytic in 
$\{s \; \vert \; s > 0\} \times \mathbb{D} \times \mathbb{D}$ and it is obviously equivalent to determine either function $E$ or $F$. 

As detailed in the following, it proves that $E$ verifies a PDE involving coefficients with polar singularities, which conveniently cancel out by considering the new function $F$. This can be stated as follows; for the sake of alleviating the notation, we omit the Laplace variable $s$ as argument of functions in the following.

\begin{prop}
{Function $F$ defined in Equation~(\ref{defFTheta}) verifies the linear partial differential equation}  for $(u,v) \in \mathbb{D} \times \mathbb{D}$ (and $\Re(s) \geq 0$)
\begin{multline}
 u P(u) \, \frac{\partial F}{\partial u}(u,v) + 
v \left [ \rho(1-\QI) - (s+1+\rho-v)(u-\QI) \right ] \, 
\frac{\partial F}{\partial v}(u,v) \\
+ \; \left [ u(u-s-1-\rho) + (u-\QI)(u+v) \right ] \ F(u,v)  +  L(u,v) = 0 
\label{PDEF0}
\end{multline}
{with polynomial $P$ introduced in (\ref{defPTheta}), and}
\begin{equation}
L(u,v) =L_0(u,v) + (u+v) \, E(\QI,v) -v (s+1+\rho-v)  
\frac{\partial E}{\partial v}(\QI,v),
\label{defL}
\end{equation}
{where}
$$
L_0(u,v) = \frac{v}{(1-u)}.
$$
\label{Corollary1}
\end{prop}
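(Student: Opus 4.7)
The plan is to derive a PDE for $E$ first and then pass to $F$. I would multiply recurrence~\eqref{E*} by $(s+\rho+1)(n+b)\,u^n v^b$ and sum over $n\geq 0$, $b\geq 1$. The left-hand side produces $(s+\rho+1)(u\partial_u+v\partial_v)E$. After the index shifts $b\mapsto b-1$ and $n\mapsto n-1$ (together with the convention $e^*_{-1,b}=0$), the terms $b\,e^*_{n,b-1}$ and $n\,e^*_{n-1,b}$ contribute $v^2\partial_v E+vE$ and $u^2\partial_u E+uE$ respectively, while the indicator $\mathbbm{1}_{\{b=1\}}$ supplies the forcing $v/(1-u)$.

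The key step, and the only place where the geometric assumption enters, is the evaluation of the arrival sum
\[
\Sigma(u,v):=\sum_{n\geq 0,\,b\geq 1}u^n v^b\sum_{m\geq 1}q_m\,e^*_{n+m,b}(s).
\]
Setting $k=n+m$ and using $q_{k-n}=(1-q)q^{k-n-1}$, the inner geometric sum collapses to $(1-q)(q^k-u^k)/(q-u)$. Summing over $k$ and $b$, the would-be $E(0,v)$ contributions cancel and I obtain
\[
\Sigma(u,v)=\frac{1-q}{u-q}\bigl[E(u,v)-E(q,v)\bigr]=(1-q)F(u,v),
\]
so that the arrival block contributes $\rho(1-q)(u\partial_u+v\partial_v)F$ to the equation. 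This is precisely why $F$ is the natural auxiliary function: it absorbs the pole at $u=q$ produced by the geometric arrival kernel.

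I would then substitute $E=(u-q)F+E(q,v)$ together with $\partial_u E=F+(u-q)\partial_u F$ and $\partial_v E=(u-q)\partial_v F+\partial_v E(q,v)$ into the intermediate PDE for $E$, and group by derivatives of $F$. The coefficient of $\partial_u F$ collapses to
\[
u\bigl[(u-q)(s+1+\rho-u)-\rho(1-q)\bigr]=-u\,P(u)
\]
by direct expansion against~\eqref{defPTheta}; analogously, the coefficient of $\partial_v F$ becomes $-v\bigl[\rho(1-q)-(s+1+\rho-v)(u-q)\bigr]$ and the coefficient of $F$ becomes $-\bigl[u(u-s-1-\rho)+(u-q)(u+v)\bigr]$. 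The remaining terms, built from $E(q,v)$, $\partial_v E(q,v)$ and $v/(1-u)$, assemble precisely into $-L(u,v)$ given by~\eqref{defL}. Multiplying the equation through by $-1$ yields~\eqref{PDEF0}.

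The main obstacle — and the real reason the geometric assumption matters — is the algebraic identification $(u-q)(s+1+\rho-u)-\rho(1-q)=-P(u)$ and its $v$-analogue: these collapses are what produce the quadratic $P$ of~\eqref{defPTheta} and keep the coefficients of $\partial_u F,\partial_v F$ polynomial. For a general batch-size distribution no such closed form would appear, so the derivation would not terminate in a finite-order PDE. Absolute convergence of all series for $|u|,|v|<1$, $\Re(s)\geq 0$ is a routine consequence of $|e^*_{n,b}(s)|\leq 1$ and Fubini, justifying the termwise manipulations.
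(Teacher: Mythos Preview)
Your proof is correct and follows essentially the same route as the paper: multiply \eqref{E*} by $(s+\rho+1)(n+b)u^n v^b$, sum, identify the contributions $v^2\partial_v E+vE+v/(1-u)$ and $u^2\partial_u E+uE$, then substitute $E=(u-q)F+E(q,v)$. Your treatment of the arrival block is in fact slightly cleaner than the paper's: you observe that the unweighted sum equals $(1-q)F$ and recover the weighted version via $(u\partial_u+v\partial_v)$, whereas the paper keeps the weight $(n+b)$ inside and invokes the finite-sum identity~\eqref{idtech} to reach the equivalent expression in terms of $E(u,v)$, $E(q,v)$ and their derivatives before passing to $F$.
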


\begin{proof}
Multiplying Equation~\eqref{E*} by $(s+\rho+1)(n+b)u^n v^b$ and summing up for $n$ ranging from 0 to infinity and $b$ from 1 to infinity, we have
\begin{multline*}
(s+\rho+1) u \frac{\partial E}{\partial u} + (s+\rho+1) v \frac{\partial E}{\partial v} = \sum_{n=0}^\infty \sum_{b=1}^\infty \mathbbm{1}_{\{ b \geq 2\}}b e^*_{n,b-1} u^nv^b +\mathbbm{1}_{\{ b=1\}} u^n v \\
+  \sum_{n=0}^\infty \sum_{b=1}^\infty n  e^*_{n-1,b} u^n v^b  + \rho(1-q)  \sum_{n=0}^\infty \sum_{b=1}^\infty \sum_{m=1}^\infty q^{m-1} (n+b) e^*_{n+m,b} u^n v^b .
\end{multline*}
The first term on the r.h.s. of the above equation is equal to 
$$
v^2 \frac{\partial E}{\partial v} +  v E  +\frac{v}{1-u}
$$
and the second term to 
$$
u^2 \frac{\partial E}{\partial u} + u E.
$$

The third term can be written as
$$
\rho(1-q) \sum_{b=1}^\infty \sum_{m=1}^\infty \sum_{n=0}^{m-1} (n+b) q^{m-1-n} u^n e^*_{m,b} v^b. 
$$
By using the fact that
\begin{equation}
\label{idtech}
\sum_{n=0}^{m-1}(n+b)q^{m-1-n} u^n = b \frac{u^m - q^m}{u-q} + m \frac{u^m}{u-q} -u \frac{u^m-q^m}{(u-q)^2},
\end{equation}
we deduce that the third term is equal to 
\begin{multline*}
\rho(1-q) \frac{v}{u-q}\left( \frac{\partial E}{\partial v}(u,v)-  \frac{\partial E}{\partial v}(q,v) \right) + \rho(1-q) \frac{u}{u-q}\frac{\partial E}{\partial u}(u,v) \\
-\rho(1-q) u \frac{E(u,v)-E(q,v)}{(u-q)^2}.
\end{multline*}
We thus obtain the following PDE satisfied by $E(u,v)$
\begin{multline*}
\frac{u P(u)}{u-q}\frac{\partial E}{\partial u}(u,v)  + v (v -(s+\rho+1))   \frac{\partial E}{\partial v} 
 +  \frac{\rho(1-q)v}{u-q}\left( \frac{\partial E}{\partial v}(u,v) -  \frac{\partial E}{\partial v}(q,v) \right)   \\ -\rho(1-q) u \frac{E(u,v)-E(q,v)}{(u-q)^2}  +(u+v)E  
 = -\frac{v}{1-u}  
 \end{multline*}

Introducing function $F(u,v)$ in the above equation, we obtain Equation~\eqref{PDEF0} by simple algebraic manipulation.
\end{proof}

At this stage, we can successively note that

\begin{itemize}
\item[\textbf{a)}] Equation~(\ref{PDEF0}) for $F$ is of order 1 and linear 
\cite[Lecture 1, Section 1.2]{ARN15} with smooth polynomial coefficients in both variables $u$ and $v$;

\item[\textbf{b)}] the last term $L(u,v)$ in (\ref{PDEF0}) involves the unknown function $E$ along with its derivative $\partial E/\partial v$ on the line $u = \QI$. 
\end{itemize}
To completely determine the function $F(u,v)$ and then $E(u,v)$, we have to compute the unknown function $E(q,v)$.

Considering $E(q,v)$ and then $L(u,v)$ as known, Equation~\eqref{PDEF0} can be integrated by using the method of characteristic curves applied in the next Section. Before addressing this integration, another simple variable change will  enable us to transform the quasi-linear Equation~\eqref{PDEF0} into a  simpler linear equation; the proof is purely computational  and hence omitted. % (\cite{ARN15}, Lecture 1, Section 1.2).

\begin{corol}
{Let}
\begin{equation}
\Phi(u,v) = P(u) (1-v)  F(u,uv), 
\qquad (u,v) \in \mathbb{D}^2
\label{defPhi}
\end{equation}
{with polynomial $P$ introduced in (\ref{defPTheta}). Then function 
$\Phi$ satisfies the inhomogeneous linear PDE}
\begin{equation}
\frac{\partial \Phi}{\partial u} -  
\left \{ \frac{(u - \QI)}{P(u)} \right\} v(1-v) \frac{\partial \Phi}{\partial v} + 
\mathcal{L}(u,v) = 0,
\label{PDEF0bis}
\end{equation}
{where}
\begin{multline}
    \label{mathcalL}
\mathcal{L}(u,v) = (1-v) \frac{L(u,uv)}{u}  \\
=(1-v)\left(\frac{v}{1-u}+(1+v)E(q,uv)-v(s+1+\rho-uv) \frac{\partial E}{\partial v}(q,uv)  \right).
\end{multline}
\label{C1}
\end{corol}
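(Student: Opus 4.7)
The plan is to substitute $\Phi(u,v) = P(u)(1-v) F(u,uv)$ directly into equation~\eqref{PDEF0} restricted to the ``diagonal'' $w = uv$, and verify by direct expansion that the zeroth-order term in $\Phi$ cancels. First, I would introduce the intermediate function $G(u,v) = F(u,uv)$, so that $\Phi = P(u)(1-v) G$. Applying the chain rule gives
$$
\frac{\partial F}{\partial u}(u,uv) = \frac{\partial G}{\partial u} - \frac{v}{u}\,\frac{\partial G}{\partial v}, \qquad \frac{\partial F}{\partial v}(u,uv) = \frac{1}{u}\,\frac{\partial G}{\partial v},
$$
so that after substitution into~\eqref{PDEF0} the coefficient of $\partial_v G$ is $v\bigl[-P(u) + \rho(1-q) - (s+1+\rho-uv)(u-q)\bigr]$. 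Expanding with $P(u) = u^2 - (s+1+\rho+q)u + sq + \rho + q$, this factors as $-u(u-q)v(1-v)$, all terms independent of $v$ cancelling identically. The coefficient of $G$ likewise collapses to $u[(2u-s-1-\rho-q) + (u-q)v]$.

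The second step is to rewrite the derivatives of $G$ in terms of $\Phi$ via $G = \Phi/[P(u)(1-v)]$, giving
$$
\frac{\partial G}{\partial u} = \frac{\partial_u \Phi}{P(u)(1-v)} - \frac{P'(u)\,\Phi}{P(u)^2(1-v)}, \qquad \frac{\partial G}{\partial v} = \frac{\partial_v \Phi}{P(u)(1-v)} + \frac{\Phi}{P(u)(1-v)^2}.
$$
Plugging these into the PDE for $G$ and multiplying through by $(1-v)/u$, the coefficient of $\partial_v \Phi$ is immediately $-(u-q)v(1-v)/P(u)$, the inhomogeneous contribution is $(1-v)L(u,uv)/u = \mathcal{L}(u,v)$, and the coefficient of $\Phi$ reduces to $\bigl[-P'(u) - (u-q)v + (2u-s-1-\rho-q) + (u-q)v\bigr]/P(u)$. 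The $(u-q)v$ contributions cancel between the $\partial_v G$ substitution and the $G$-coefficient, and the remainder $-P'(u) + 2u - s - 1 - \rho - q$ vanishes because $P'(u) = 2u - (s+1+\rho+q)$. Thus the zeroth-order term disappears and~\eqref{PDEF0bis} follows.

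Since everything reduces to polynomial bookkeeping once the substitution is performed, there is no genuine obstacle beyond tracking signs. The conceptual point worth flagging is that the prefactor $P(u)(1-v)$ in the definition of $\Phi$ is chosen precisely so as to annihilate both the logarithmic derivative of $P$ produced by $\partial_u \Phi$ and the simple pole at $v = 1$ propagating from $\partial_v G$; any other normalization would leave a residual zeroth-order term and prevent the equation from reducing to the pure transport form~\eqref{PDEF0bis}.
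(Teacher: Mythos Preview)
Your proof is correct. The paper itself omits the argument entirely, stating only that ``the proof is purely computational and hence omitted,'' and your direct substitution---introducing $G(u,v)=F(u,uv)$, reading off the coefficients of $\partial_u G$, $\partial_v G$ and $G$ from~\eqref{PDEF0}, then passing to $\Phi=P(u)(1-v)G$ and observing that the zeroth-order coefficient collapses via $P'(u)=2u-(s+1+\rho+q)$---is precisely the computation the authors had in mind. Your closing remark about why the prefactor $P(u)(1-v)$ is the right normalization is a nice addition that the paper does not make explicit.
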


%%%%%%%%%%%%%%%%%%%%%%%%%%%%%%%%%%%%%%%%%%%%%%%%%%%%%%%%%%%%%%
\section{Determination of function $F$}
\label{Sec:Resol}

\subsection{Resolution of the partial differential equation}

%%%%%%%%%%%%%%%%%%%%%%%%%%%%%%%%%%%%%%%%%%%%%%%%%%%%%%%%%%%%%%

To determine function $F$, we first solve the first order PDE~\eqref{PDEF0bis} by the method of characteristics curves.

\begin{lemma}
The characteristic curve $(t,Z(u,v;t))$ associated with the PDE ~\eqref{PDEF0bis} passing through the point $(u,v)$ is given by
\begin{equation}
\label{charcurve}
    Z(u,v;t)=  \frac{v \frac{{R}(t)}{R(u)}}{(1-v) + v \frac{{R}(t)}{R(u)}},
\end{equation}
where 
\begin{equation}
R(t) = \left(1-\frac{t}{U^-}\right)^{C^--1}  \left(1-\frac{t}{U^+}\right)^{C^+-1} 
    \label{defR}
\end{equation}
with
\begin{equation}
C^+= - \frac{U^- - \QI}{U^+  - U^-}<0,
\quad
C^-= 1-C^+=- \frac{U^+ - \QI}{U^- -U^+}>1,
\label{C+-}
\end{equation}
$U^-$ and $U^+$ being the roots of the polynomial $P(u)$ satisfying inequality~\eqref{INEQ-U}.
\end{lemma}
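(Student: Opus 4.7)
The plan is to apply the method of characteristics to the first-order linear PDE~\eqref{PDEF0bis}. Since the lemma concerns only the geometry of the characteristic curves, the inhomogeneous term $\mathcal{L}(u,v)$ plays no role at this step and can be set aside.

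I would first write the characteristic system in the standard form
\[
\frac{du}{dt}=1,\qquad \frac{dv}{dt}=-\frac{u-q}{P(u)}\,v(1-v),
\]
and then parameterize the curve by the $u$-coordinate itself, reducing the problem to the scalar separable ODE $dv/du=-\tfrac{u-q}{P(u)}\,v(1-v)$. Separation of variables on the left yields $d\ln\bigl(v/(1-v)\bigr)$, since $dv/(v(1-v))=dv/v+dv/(1-v)$, so the task reduces to finding an explicit antiderivative of $-(u-q)/P(u)$.

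The crux of the argument is to identify this antiderivative with $\ln R(u)$, where $R$ is defined in~\eqref{defR}. I would factor $P(u)=(u-U^-)(u-U^+)$ and perform a partial-fraction decomposition $\tfrac{u-q}{P(u)}=\tfrac{C^+}{u-U^-}+\tfrac{C^-}{u-U^+}$; the coefficients are precisely those of~\eqref{C+-} and satisfy $C^++C^-=1$. A direct logarithmic differentiation of~\eqref{defR}, combined with the relations $C^--1=-C^+$ and $C^+-1=-C^-$, then yields $(\ln R)'(u)=-(u-q)/P(u)$. This identification is the main (and essentially only) nontrivial computation required.

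Integrating the separated ODE along a characteristic from parameter $u$ to parameter $t$ with the initial condition $Z(u,v;u)=v$ gives $\ln\bigl(Z/(1-Z)\bigr)-\ln\bigl(v/(1-v)\bigr)=\ln R(t)-\ln R(u)$, equivalently $Z/(1-Z)=\tfrac{v}{1-v}\cdot R(t)/R(u)$. Inverting this M\"obius relation (if $Z/(1-Z)=\alpha$ then $Z=\alpha/(1+\alpha)$) reproduces exactly the closed form~\eqref{charcurve} stated in the lemma.
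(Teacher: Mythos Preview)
Your proof is correct and follows essentially the same route as the paper: both separate variables in the ODE $\tfrac{dz}{z(1-z)}=-\tfrac{(t-q)}{P(t)}\,dt$, identify the antiderivative of the right-hand side with $\ln R$ via the partial-fraction decomposition (the paper writes it as $\tfrac{q-t}{P(t)}=\tfrac{C^+-1}{t-U^+}+\tfrac{C^--1}{t-U^-}$, which is your decomposition with $C^\pm-1=-C^\mp$), and then impose the initial condition $(u,v)$ to recover~\eqref{charcurve}. The only cosmetic difference is that the paper carries an integration constant $\kappa$ and fixes it afterward, whereas you integrate between definite limits.
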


\begin{proof}
A characteristic curve $(t,z)$ associated with the PDE~\eqref{PDEF0bis} satisfies the ordinary differential equation
$$
\frac{dz}{z(1-z)}= -\frac{(t-q)dt}{P(t)}
$$
and then
$$
\frac{1}{z(1-z)}\frac{dz}{dt} =\frac{q-t}{P(t)} = \frac{C^+-1}{t-U^+}+\frac{C^--1}{t-U^-}.
$$
This implies that 
$$
\frac{z}{1-z}= \kappa R(t),
$$
where $R(t)$ defined by Equation~\eqref{defR} satisfies
\begin{equation}
    \label{derivR}
\frac{dR}{dt} = \frac{q-t}{P(t)} R(t),
\end{equation}
and where $\kappa$ is an integration constant. A characteristic curve is then given by
$$
z(t) = \frac{\kappa R(t)}{1+\kappa R(t)}. 
$$

If we want the characteristic to pass through the point $(u,v)$ then
$$
\kappa = \frac{v}{(1-v)R(u)}
$$
and Equation~\eqref{charcurve} follows.
\end{proof}

Before proceeding with the determination of the function $\Phi(u,v)$, it is worth noting that the  function $u \to R(u)/R(u_0)$ for $u_0\in \mathbb{D}\setminus \{U^-\}$ is analytic on the cut disk $\mathbb{D} \setminus r_{u_0}$, where  $r_{u_0} = \{t: \frac{t-U^-}{u_0-U^-}\leq 0\}$ is the half line issued from $U^-$ and having the same direction as $[u_0,U^-]$; see \cite{Flatto,GQS} for details.

Using the characteristic curves specified above, we immediately obtain the following corollary.
\begin{corol}
The function $\Phi(u,v)$ is given by for $0<|u|<1$ and $|v|<1$
\begin{equation}
    \label{defPhiexp}
    \Phi(u,v) = \int_u^{U^-} (1-Z(u,v;\xi)) L\left(\xi,  \xi Z(u,v;\xi)\right) \frac{d\xi}{\xi},
\end{equation}
where the function $L(u,v)$ is defined by Equation~\eqref{defL} and the function $Z(u,v;t)$ by Equation~\eqref{charcurve}.
\end{corol}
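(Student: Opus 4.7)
The plan is to apply the method of characteristics to the first-order linear PDE~\eqref{PDEF0bis} satisfied by $\Phi$, using the characteristic curves $Z(u,v;\cdot)$ already constructed in the preceding lemma. The key observation that makes the scheme close is that the factor $P(u)$ in the definition~\eqref{defPhi} of $\Phi$ supplies a natural boundary condition at the characteristic endpoint $u=U^-$.

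First, I would pull back $\Phi$ along a characteristic. Since $Z(u,v;t)$ is defined so that $Z(u,v;u)=v$ and satisfies the characteristic ODE
\[
\frac{d}{dt}Z(u,v;t)=\frac{q-t}{P(t)}\,Z(u,v;t)\bigl(1-Z(u,v;t)\bigr),
\]
differentiating $t\mapsto \Phi(t,Z(u,v;t))$ and using~\eqref{PDEF0bis} gives
\[
\frac{d}{dt}\Phi(t,Z(u,v;t))
=\frac{\partial\Phi}{\partial u}(t,Z)+\frac{(q-t)\,Z(1-Z)}{P(t)}\,\frac{\partial\Phi}{\partial v}(t,Z)
=-\mathcal{L}(t,Z(u,v;t)).
\]
Thus along each characteristic the PDE has been reduced to a quadrature.

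Second, I would integrate this ODE from $t=u$ up to $t=U^-$. The boundary value at the endpoint is the crucial ingredient: by the very definition $\Phi(u,v)=P(u)(1-v)F(u,uv)$ together with $P(U^-)=0$ and the fact (from Section~\ref{Sec:PDE} and the analyticity in~\cite{GQS}) that $F$ remains bounded as $u\to U^-$ inside the unit disk, we get
\[
\lim_{t\to U^-}\Phi(t,Z(u,v;t))=0.
\]
Therefore
\[
-\Phi(u,v)=-\int_{u}^{U^-}\mathcal{L}(t,Z(u,v;t))\,dt,
\]
and substituting $\mathcal{L}(t,z)=(1-z)L(t,tz)/t$ from~\eqref{mathcalL} produces exactly the integral representation~\eqref{defPhiexp}.

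The step I expect to be the main obstacle is justifying the path of integration and the limiting behavior at $t=U^-$. The function $R(t)$ entering $Z(u,v;\cdot)$ has algebraic singularities at $U^{\pm}$, so one must argue (as in~\cite{Flatto,GQS}) that the characteristic curve stays within the cut disk $\mathbb{D}\setminus r_u$ on which $R(t)/R(u)$ is analytic, that $Z(u,v;t)$ stays in $\mathbb{D}$ so that $L(t,tZ)$ is well defined, and that the product $(1-Z(u,v;t))L(t,tZ(u,v;t))/t$ is integrable up to $U^-$ despite the branch-point behavior — this is where the choice of exponents $C^{\pm}$ in~\eqref{C+-} and the cancellation effected by the factor $P(u)$ in $\Phi$ play their role. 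Once these analytic details are in place, the characteristic integration is routine and the corollary follows.
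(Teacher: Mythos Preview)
Your proposal is correct and follows essentially the same approach as the paper: reduce the PDE to the ODE $\frac{d}{dt}\Phi(t,Z(u,v;t))=-\mathcal{L}(t,Z(u,v;t))$ along characteristics, integrate from $u$ to $U^-$, and use the analyticity of $F$ at $u=U^-$ (equivalently, your observation that $\Phi=P(u)(1-v)F(u,uv)$ vanishes there since $P(U^-)=0$) to kill the integration constant. Your explicit identification of the boundary condition via the factor $P(u)$ is a slightly more concrete way of phrasing what the paper states as ``since $F$ has to be analytic at $U^-$, $\kappa=0$,'' but the argument is the same.
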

\begin{proof}
Along a characteristic curve (say, passing through the point $(u,v)$), the function $\Phi(t,Z(u,v;t))$ satisfies 
$$
\frac{d\Phi}{dt}(t,Z(u,v;t)) = -\mathcal{L}(t,Z(u,v; t)),
$$
where $\mathcal{L}(u,v)$ is defined by Equation~\eqref{mathcalL} and then 
$$
\Phi(t,Z(u,v;t)) = \int_{t}^{U^-} \mathcal{L}(\xi,Z(u,v;\xi))d\xi+ \kappa
$$
for some integration constant $\kappa$ (depending on $(u,v)$). Taking into account the relation between $F(u,v)$ and $\Phi(u,v)$ and since $F(u,v)$ has to be analytic in $\mathbb{D}^2$ and in particular at point $u=U^-$, we have $\kappa=0$ and the result follows.
\end{proof}

Taking into account the relation between $F(u,v)$ and $\Phi(u,v)$, we deduce the following result.

\begin{lemma}
The function $F(u,v)$ is given for $0<|u|<1$ and $|v|<1$ by
\begin{equation}
    \label{defF}
    F(u,v) = \frac{u}{(u-v)P(u)}\int_u^{U^-}  \left(1-Z\left(u,\frac{v}{u};\xi\right)\right) L\left(\xi,  \xi Z\left(u,\frac{v}{u};\xi\right)\right)  \frac{d\xi}{\xi},
\end{equation}
where the function $Z(u,v;t)$ is defined by Equation~\eqref{charcurve}.
\end{lemma}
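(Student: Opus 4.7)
The plan is a direct algebraic inversion of the substitution relating $\Phi$ to $F$, followed by insertion of the integral representation from the preceding corollary. I expect the entire argument to be short, with the only point requiring comment being the verification that the apparent pole along $u=v$ is removable.

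First, I would recall from Equation~(\ref{defPhi}) that $\Phi(u,v) = P(u)(1-v)\,F(u,uv)$, and invert this relation by the substitution $v \mapsto v/u$ (valid for $u \neq 0$), giving
$$
\Phi(u, v/u) \;=\; P(u)\left(1-\frac{v}{u}\right) F(u,v) \;=\; \frac{u-v}{u}\, P(u)\, F(u,v),
$$
so that
$$
F(u,v) \;=\; \frac{u}{(u-v)\,P(u)}\,\Phi(u, v/u).
$$

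Next, I would apply the integral formula~(\ref{defPhiexp}) from the previous corollary with $v$ replaced by $v/u$, namely
$$
\Phi(u, v/u) \;=\; \int_u^{U^-} \bigl(1 - Z(u, v/u;\xi)\bigr)\, L\bigl(\xi,\, \xi Z(u, v/u;\xi)\bigr)\, \frac{d\xi}{\xi},
$$
and substitute into the identity above. This immediately yields the representation~(\ref{defF}).

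Finally, to reconcile the formula with the known analyticity of $F$ on $\mathbb{D}^2$, I would briefly check that the prefactor $1/(u-v)$, which looks singular on the diagonal $v=u$, is in fact compensated by a zero of the integral there. Indeed, setting $v=u$ gives $v/u = 1$, and from the explicit form~(\ref{charcurve}) of the characteristic one reads $Z(u,1;\xi)\equiv 1$, so the factor $1 - Z(u, v/u; \xi)$ vanishes identically along $v=u$. This removes the apparent singularity and is consistent with $F$ being analytic in both variables on $\mathbb{D}^2$. The main (and essentially only) obstacle is bookkeeping the substitution $v \mapsto v/u$ inside the integrand correctly; no new analytical input is required beyond what has already been established.
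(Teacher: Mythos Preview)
Your proposal is correct and is exactly the approach the paper takes: the paper simply writes ``Taking into account the relation between $F(u,v)$ and $\Phi(u,v)$, we deduce the following result,'' and your inversion $v\mapsto v/u$ in~(\ref{defPhi}) followed by substitution of~(\ref{defPhiexp}) makes that remark explicit. Your extra check that the apparent pole at $u=v$ is removable (via $Z(u,1;\xi)\equiv 1$) is a nice addition that the paper omits.
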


It is worth noting that the function $L(u,v)$ depends on $E(q,v)$, which is so far unknown. We give an integral representation of this latter function in the next section.

\subsection{Determination of the unknown function $E(q,v)$}
\subsubsection{Infinite lower triangular linear system for the coefficients}
Let us introduce the coefficients $E_b(q)$ for $b \geq 1$ so that  
$$
E(q,v) = \sum_{b=1}^\infty E_b(q) v^b.
$$
The function $F(u,v)$ is defined so far for $0<|u|<1$ and $v\in \mathbb{D}$. This function shall be analytic for $u=0$ as this function is related to the generating function $E(u,v)$ according to Equation~\eqref{defFTheta}. This requires that  the coefficients $E_b(q)$, $b \geq 1$,  satisfy the following linear system; the proof is given in Appendix~\ref{App3}.

\begin{prop}
\label{CNCL}
The function $F(u,v)$ is analytic in $\mathbb{D}^2$ if and only if for $b \geq 1$
\begin{equation}
\label{syslin}
    \sum_{\ell =1}^b (-1)^\ell \binom{b}{\ell} Q_{b,\ell} E_\ell(q) = b\int_0^{U^-} R(\xi)^b \frac{d \xi}{1-\xi}
\end{equation}
where for $b,\ell \geq 1$
\begin{equation}
\label{defQbl}
    Q_{b,\ell} = \int_0^{U^-} ((\ell-b+1)z -\ell(1+\rho+s)) R(z)^b z^{\ell-1} dz.
\end{equation}
\end{prop}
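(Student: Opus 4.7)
The plan is to expand $F(u,v) = \sum_{b \geq 1} F_b(u) v^b$ into its $v$-Taylor coefficients and characterize analyticity coefficient by coefficient. In \eqref{defF} the prefactor $1 - Z(u,v/u;\xi) = (u-v)R(u)/\Delta$, with $\Delta := uR(u) + v(R(\xi) - R(u))$, already cancels the apparent pole at $u = v$, and the zero $P(U^-) = 0$ is removable because the integral vanishes at its upper limit. So each $F_b(u)$ is a priori analytic on $\mathbb{D} \setminus \{0\}$, and the only obstruction to analyticity in $\mathbb{D}^2$ is at $u = 0$.

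To extract $F_b(u)$ explicitly, I would write
\[
L(\xi, v') = \frac{v'}{1-\xi} + \sum_{\ell \geq 1} A_\ell(\xi)\, v'^\ell, \qquad A_\ell(\xi) := [\xi - \ell(s+1+\rho)] E_\ell(q) + \ell E_{\ell-1}(q) \mathbbm{1}_{\ell \geq 2},
\]
substitute $v' = \xi Z = \xi v R(\xi)/\Delta$, and expand $\Delta^{-(k+1)}$ as a geometric series in $v$ around the ``constant'' $uR(u)$. Coefficient extraction then gives
\[
F_b(u) = \frac{G_b(u)}{u^b R(u)^b P(u)}, \qquad G_b(u) := \int_u^{U^-} \Psi_b(\xi, u) \, d\xi,
\]
with
\[
\Psi_b(\xi, u) = \frac{b R(\xi)(R(u) - R(\xi))^{b-1}}{1-\xi} + \sum_{\ell=1}^b \binom{b}{\ell} A_\ell(\xi)\, \xi^{\ell-1} R(\xi)^\ell (R(u) - R(\xi))^{b-\ell}.
\]
Analyticity of $F_b$ at $u = 0$ is therefore equivalent to $G_b(u) = O(u^b)$ as $u \to 0$.

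The key next step is the recursion $\partial_u \Psi_b(\xi, u) = b R'(u) \Psi_{b-1}(\xi, u)$, which follows by direct differentiation and the identity $\binom{b}{\ell}(b-\ell) = b \binom{b-1}{\ell}$. Since at $\xi = u$ the factor $(R(u) - R(\xi))^{b-\ell}$ kills every term except $\ell = b$, one also has $\Psi_b(u, u) = A_b(u)\, u^{b-1} R(u)^b$ for $b \geq 2$. Leibniz's rule then gives
\[
G_b'(u) = -A_b(u)\, u^{b-1} R(u)^b + b R'(u) G_{b-1}(u),
\]
and a straightforward induction on $b$ shows $G_b(u) = O(u^b)$ iff $G_j(0) = 0$ for every $j = 1, \ldots, b$. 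Hence $F$ is analytic on $\mathbb{D}^2$ iff $G_b(0) = 0$ for all $b \geq 1$.

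Finally, to match $G_b(0) = 0$ with \eqref{syslin}, I would set $u = 0$ (so $R(0) = 1$), expand $(1 - R(\xi))^{b-\ell}$ binomially, and regroup by powers of $R(\xi)$ to write $G_b(0) = \sum_{m=1}^b \binom{b}{m} H_m$ with
\[
H_m = (-1)^{m-1} m \int_0^{U^-} \frac{R^m}{1-\xi}\, d\xi + \sum_{\ell=1}^m (-1)^{m-\ell} \binom{m}{\ell} \int_0^{U^-} R^m A_\ell\, \xi^{\ell-1}\, d\xi.
\]
M\"obius inversion on the unipotent Pascal matrix makes $\{G_b(0) = 0\}$ and $\{H_m = 0\}$ equivalent. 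Substituting the two pieces of $A_\ell$ and using $\binom{m}{\ell+1}(\ell+1) = \binom{m}{\ell}(m-\ell)$ to merge the $E_{\ell-1}$-contribution of $A_{\ell+1}$ into the $E_\ell$-contribution of $A_\ell$ produces the integrand $R^m \xi^{\ell-1}[(\ell - m + 1)\xi - \ell(s+1+\rho)]$, which is precisely the integrand of $Q_{m,\ell}$ in \eqref{defQbl}. After multiplying through by $(-1)^m$, one recovers exactly \eqref{syslin} for $b = m$. The main obstacle is the combinatorial bookkeeping in the coefficient extraction (obtaining the explicit $\Psi_b$) and in the merger step of the last paragraph; the recursion and induction arguments are elementary once $\Psi_b$ is in hand.
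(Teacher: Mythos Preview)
Your argument is correct and reaches exactly \eqref{syslin}, but it follows a different route from the paper. The paper (Appendix~\ref{App3}) does not start from the characteristic integral \eqref{defF}; instead it goes back to the recurrence \eqref{E*}, derives a first-order ODE in $u$ for each coefficient $F_b(u)$, and solves it to obtain the representation \eqref{Fb*}, whose integrand involves the \emph{full} generating function $E_{b-1}(z)$ rather than only the constants $E_\ell(q)$. Analyticity at $u=0$ then gives the single condition \eqref{condanalytic}, and the triangular system \eqref{syslin} is produced by an integral recursion (Lemma~\ref{fb2}) that iteratively eliminates $E_{b-1}(z),E_{b-2}(z),\dots$ in favour of the boundary values $E_\ell(q)$. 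Your approach instead extracts $F_b(u)$ directly from \eqref{defF}, so that only the constants $E_\ell(q)$ appear from the outset, at the price of the extra $u$-dependence $(R(u)-R(\xi))^{b-\ell}$; you then handle that dependence via the neat differential recursion $\partial_u\Psi_b = bR'(u)\Psi_{b-1}$ and finish with a binomial expansion plus Pascal/M\"obius inversion. The paper's route is more modular---the same ODE machinery is reused verbatim for the mean-value functions $F_b^{(1)}$---while yours is more self-contained once \eqref{defF} is available and sidesteps the need to manipulate the full $E_{b-k}(z)$'s. One small remark: your fixed-$b$ claim ``$G_b(u)=O(u^b)$ iff $G_j(0)=0$ for $j\le b$'' is slightly stronger than what you actually need or prove; the induction cleanly gives the equivalence $\{G_b(u)=O(u^b)\ \forall b\}\Leftrightarrow\{G_b(0)=0\ \forall b\}$, which is all the proposition requires.
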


The coefficients $Q_{b,\ell}$ can be written in terms of Gauss hypergeometric function $F(a,b,c;z)$ \cite{Abramowitz} as 
\begin{multline*}
    Q_{b,\ell}=-\frac{\Gamma(\ell)\Gamma(1-b C^+)}{\Gamma(\ell+1-bC^+)} (U^-)^{\ell+1} \frac{x}{1-x} \\
    \left(C^+(b-\ell)F(bC^-,\ell,\ell+1-bC^+;1-x )+ (\ell - b C^+) F(b C^-,\ell,\ell-bC^+;1-x) \right),
\end{multline*}
where $x= 1-\frac{U^-}{U^+}$ and $\Gamma(z)$ is Euler Gamma function. It is worth noting that both the coefficients $Q_{b,\ell}$ for $b,\ell \geq 1$ and $x$ depend on the Laplace variable $s$.

By using the contiguous relations satisfied by the  function $F(a,b,c;z)$ \cite{Abramowitz}, it is possible to show that 
$$
Q_{b,\ell} = -(U^-)^{\ell+1} \frac{\Gamma(b) \Gamma(1 - b C^+)}{\Gamma(b-bC^+)}\frac{x^{1-b}}{1-x} F(\ell-b,-bC^+,-b;x);
$$
see \cite{asymp} for details.

\subsubsection{Computation of $E(q,v)$}

By using the results of \cite{ridha}, we can now compute the unknown function $E(q,v)$ in terms of function $\Theta(s;w)$ defined as follows. For some $\nu\in \mathbbm{C}$: define $\theta(\nu;w)$ as the solution to the equation
\begin{equation}
    \label{deftheta}
    1-\theta+w \theta^{\nu} =0,
\end{equation}
analytic in the neighbourhood of $w=0$ and such that $\theta(\nu, 0)=1$; see \cite{Polya}. In the following, we set $\Theta(s;w)=\theta(1-C^+(s);w)$. We also introduce the function $\Sigma(\nu;w)$ defined by
\begin{equation}
\label{defSigma}
\Sigma(\nu,w) = \frac{w}{\theta(\nu,w)}\frac{\partial \theta}{\partial w}(\nu;w).
\end{equation}
It is easily checked that by taking the derivative of Equation~\eqref{deftheta}
\begin{equation}
    \label{sigmatheta}
  \Sigma(\nu;w) = \frac{ \theta(\nu;w) -1}{(1-\nu)  \theta(\nu;w) +\nu}.
\end{equation}

In addition, from\cite{Polya}, we have
\begin{equation}
\label{defSigmab}
\Sigma(\nu;w) = \sum_{\ell=1}^\infty \frac{\Gamma(\ell \nu)}{\Gamma(\ell)\Gamma(1-(1-\nu)\ell)} w^\ell
\end{equation}
defined for $|w|< |\exp(-\psi(\nu))|$, where
$$
\psi(\nu) =\left\{ \begin{array}{ll} (1-\nu)\log(1-\nu) + \nu\log(-\nu) & \nu \in \mathbbm{C}\setminus[0,\infty), \\   (1-\nu)\log(1-\nu) + \nu\log(\nu) & \nu \in [0,1], \\
 (1-\nu)\log(\nu - 1 ) + \nu\log(\nu) & \nu \geq 1.
\end{array}
\right.
$$

\begin{prop}
The function $E(q,v)$ is given for $|v|<|\exp(-\psi(1-C^+(s)))|$ by
\begin{equation}
    \label{eqvexp}
E(q,v) =  \frac{Q_0(v)}{(U^+-U^-)P(v)}  \int_0^{U^-}\Psi_0(\Theta(x R(\xi)X(v))) \frac{d \xi}{1-\xi},
\end{equation}
where 
\begin{eqnarray}
    \label{defQ0}
    Q_0(v) &=& U^+ U^--q v, \\
    \label{defX}
    X(v)& =&  \frac{v}{v-U^-}\left(\frac{1-\frac{v}{U^-}}{1-\frac{v}{U^+}}  \right)^{C^+} = \frac{-U^+ v}{P(v)R(v)},\\
    \label{defPsi0}
\Psi_0(t) &=& \frac{t(1-t)}{(C^+ t +1-C^+)^3}.
\end{eqnarray}
\end{prop}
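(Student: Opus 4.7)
The plan is to invoke the inversion machinery of \cite{ridha} for the lower-triangular system~\eqref{syslin}, then re-sum the resulting series for the coefficients $E_\ell(q)$ against $v^\ell$ to recover~\eqref{eqvexp}.

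First, I would replace the coefficients $Q_{b,\ell}$ in~\eqref{syslin} by the hypergeometric form derived in the previous subsection, namely
$$
Q_{b,\ell} = -(U^-)^{\ell+1}\,\frac{\Gamma(b)\Gamma(1-bC^+)}{\Gamma(b-bC^+)}\,\frac{x^{1-b}}{1-x}\,F(\ell-b,-bC^+,-b;x),
$$
and absorb the $\ell$-independent prefactor into the right-hand side. The residual $\ell$-dependence on the left is a hypergeometric polynomial in $x$, so the system fits exactly the framework of \cite{ridha} for inversions governed by the Polya-type functional equation~\eqref{deftheta} with parameter $\nu = 1-C^+$. On the right-hand side, the factor $b \int_0^{U^-} R(\xi)^b \, d\xi/(1-\xi)$ naturally combines with the $\Gamma$-ratios produced by the substitution to yield a sequence indexed by $b$ whose generating series is accessible via Lagrange--Polya inversion.

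Next, I would apply the explicit inversion formula of \cite{ridha}. Writing $\Theta(w) = \theta(1-C^+;w)$ for the analytic solution of $1 - \Theta + w\,\Theta^{1-C^+} = 0$ from~\eqref{deftheta}, the inversion produces each $E_\ell(q)$ as the $\ell$-th Taylor coefficient of a $\Theta$-based series. To obtain $E(q,v) = \sum_\ell E_\ell(q) v^\ell$ in closed form, I would introduce the auxiliary variable $X(v)$ defined in~\eqref{defX}, for which the identity $P(v) R(v) X(v) = -U^+ v$ follows directly from the product form~\eqref{defR} of $R$ together with the factorization $P(v) = (v - U^-)(v - U^+)$. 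Substituting $w = x R(\xi) X(v)$ and interchanging the $\ell$-summation with the integration over $\xi \in (0, U^-)$, the series should collapse to the rational prefactor $Q_0(v)/\bigl((U^+-U^-)P(v)\bigr)$ times $\Psi_0(\Theta(xR(\xi)X(v)))$.

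Finally, one must justify the analytic manipulations: the Polya series~\eqref{defSigmab} converges for $|w| < |\exp(-\psi(1-C^+))|$, so the interchange of sum and integral is legitimate provided $x R(\xi) X(v)$ stays inside this disk for $\xi \in (0, U^-)$ and $v$ in a neighborhood of the origin, which is easily verified from the monotonicity of $R$ on $(0, U^-)$ and the definition~\eqref{defX}. The main obstacle, in my view, is matching coefficients precisely: the specific rational shape $\Psi_0(t) = t(1-t)/(C^+ t + 1 - C^+)^3$ of~\eqref{defPsi0} emerges only after carefully combining the $(\ell - b + 1)z - \ell(1+\rho+s)$ structure of~\eqref{defQbl} with the two $\Gamma$-quotients in Polya's formula~\eqref{defSigmab}, so the bulk of the computation from \cite{ridha} must be transcribed into the present notation with care.
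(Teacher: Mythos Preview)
Your plan matches the paper's approach: invoke the inversion of \cite{ridha} on system~\eqref{syslin}, pass to the generating series in $v$, and identify the resulting Polya-type sum with $\Theta$. The one place where your outline stays vague is exactly the step you flag as ``the main obstacle,'' and here the paper uses a specific trick you should adopt rather than a brute-force coefficient match. After applying \cite{ridha}, the output is
\[
E(q,v)=\frac{-1}{U^+-U^-}\left(\frac{C^-}{1-v/U^-}+\frac{C^+}{1-v/U^+}\right)
\int_0^{U^-}\sum_{\ell\ge 1}\frac{\Gamma(\ell(1-C^+))}{\Gamma(\ell)\Gamma(1-\ell C^+)}\,\ell\,(xR(\xi)X(v))^{\ell}\,\frac{d\xi}{1-\xi}.
\]
The rational prefactor is not merely $Q_0(v)/P(v)$: it equals $v\,X'(v)/X(v)$, a logarithmic derivative. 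Combined with the extra factor $\ell$ in the series (which is $w\,\partial_w$ acting on $\Sigma(1-C^+;w)$ from~\eqref{defSigmab}), the whole expression becomes $-\dfrac{v}{U^+-U^-}\,\partial_v\mathcal{E}(v)$ with
\[
\mathcal{E}(v)=\int_0^{U^-}\Sigma\bigl(1-C^+;\,xR(\xi)X(v)\bigr)\,\frac{d\xi}{1-\xi}
=\int_0^{U^-}\frac{\Theta-1}{C^+\Theta+1-C^+}\,\frac{d\xi}{1-\xi},
\]
the second equality by~\eqref{sigmatheta}. Differentiating this closed form in $v$ (using the implicit relation~\eqref{deftheta} for $\partial_w\Theta$) is what produces the cubic denominator $(C^+t+1-C^+)^3$ and hence $\Psi_0$. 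So the emergence of $\Psi_0$ is not a matter of tracking the $(\ell-b+1)z-\ell(1+\rho+s)$ structure through the $\Gamma$-quotients, but of recognising a total $v$-derivative; once you see that, the computation is short.
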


\begin{proof}
By applying the results of \cite{ridha}, we have
\begin{multline*}
E(q,v) = \frac{-1}{U^+-U^-} \left( \frac{C^-}{1-\frac{v}{U^-}} + \frac{C^+}{1-\frac{v}{U^+}} \right)\\  \int_0^{U^-} \sum_{\ell=1}^\infty \frac{\Gamma(\ell(1-C^+))}{\Gamma(\ell)\Gamma(1-\ell C^+)} \ell  (x X(v) R(\xi))^\ell \frac{d\xi}{1-\xi},   
\end{multline*}
where $X(v)$ is defined by Equation~\eqref{defX}.

It is worth noting that 
$$
 \left( \frac{C^-}{1-\frac{v}{U^-}} + \frac{C^+}{1-\frac{v}{U^+}} \right) = \frac{U^+U^--q v}{P(v)} = v \frac{X'(v)}{X(v)},
$$
so that 
$$
E(q,v) = \frac{-v}{U^+-U^-}\frac{\partial\mathcal{E} }{\partial v}( v),
$$
where
$$
\mathcal{E}(v) = \int_0^{U^-} \sum_{\ell=1}^\infty \frac{\Gamma(\ell(1-C^+))}{\Gamma(\ell)\Gamma(1-\ell C^+)} (x R(\xi)X(v))^\ell   \frac{d\xi}{1-\xi} .
$$
 By using the series expansion~\eqref{defSigmab}, we obtain
\begin{align*}
    \mathcal{E}(v) &= \int_0^{U^-}  \Sigma (1-C^+(s);x R(\xi)X(v)) \frac{d\xi}{1-\xi} \\ &=  \int_0^{U^-} \frac{\Theta(x R(\xi)X(v))-1}{C^+\Theta(xR(\xi)X(v))+1-C^+}\frac{d \xi}{1-\xi},
\end{align*}
where we have used Equation~\eqref{sigmatheta}. By taking derivatives, the result follows. 
\end{proof}

To conclude this section, let us note  the following relation, which will be useful in the computation of $F(u,v)$:
\begin{equation}
    \label{derivePsi0}
    \frac{\partial \Psi_0}{\partial v}(\Theta(xR(\xi)X(v)) =-\frac{Q_0(v)}{vP(v)}\Psi_1(\Theta(xR(\xi)X(v)),
\end{equation}
where 
\begin{equation}
    \label{defPsi1}
 \Psi_1(x) = \frac{t(1-t)(1-2 t -C^+(1-t^2))}{(C^+ t +1-C^+)^5}.  
\end{equation}
\begin{comment}
\begin{conjecture}
The function $\Theta (v)$ is analytically defined in $\mathbbm{C} \setminus [\exp(-\psi(C^+)),\infty)$. Equation~\eqref{defSigma} shows that $\Sigma(v)$ can be analytically defined in the same domain. Note that for $s>0$
$$
\exp(-\psi(C^+)) = \frac{1}{(1-C^+)^{1-C^+}(-C^+)^{C^+}}
$$
\end{conjecture}

\end{comment}

\subsection{Computation of the function $F(u,v)$}

By using the expression of $E(q,v)$ we are able to compute the function $F(u,v)$.
\begin{comment}

 F(u,v) =  L\left(\xi,  \xi Z\left(u,\frac{v}{u};\xi\right)\right)  \frac{d\xi}{\xi},

\end{comment}
\begin{prop}
The function $F(u,v)$ is given by
\begin{align}
F(u,v) &=\frac{u}{(u-v)P(u)} \int_u^{U^-} \left(1-Z\left(u,\frac{v}{u};y\right)\right) Z\left(u,\frac{v}{u};y\right)\frac{d y}{1-y} \label{Fuv} \\
& +\frac{u}{(u-v)P(u)}\int_u^{U^-} \left(1-Z\left(u,\frac{v}{u};y\right)\right) L_1\left(y, y Z\left(u,\frac{v}{u};y\right)   \right)     \frac{d y}{y} \nonumber \\
& +  \frac{u}{(u-v)P(u)}\int_u^{U^-}\left(1-Z\left(u,\frac{v}{u};y\right)\right) L_2\left(y Z\left(u,\frac{v}{u};y\right) \right)     \frac{d y}{y},\nonumber
\end{align}
 where
$$
L_1(u,v)=\frac{1}{U^+-U^-}  \left(u \frac{Q_0(v)}{P(v)}   + v \frac{Q_1(v)}{P(v)^2} \right)   \int_0^{U^-}\Psi_0(\Theta(x R(\xi)X(v))) \frac{d \xi}{1-\xi} 
$$
and
$$
 L_2(v) = \\  \frac{(U^++U^--q-v)Q_0(v)^2}{(U^+-U^-)P(v)^2} \int_0^{U^-}  \Psi_1(\Theta(x R(\xi)X(v)))    \frac{d \xi}{1-\xi}
$$
with 
$$
Q_1(v) = (q^2-U^+U^-)v^2 + 2U^+U^-(U^++U^--2q)v - U^+U^-((U^++U^--q)^2-U^+U^-),
$$
and $\Psi_1(t)$ is defined by Equation~\eqref{defPsi1}.
\end{prop}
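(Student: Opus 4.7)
The plan is to start from the integral representation of $F(u,v)$ given in Equation~\eqref{defF} and simply substitute the explicit formula~\eqref{eqvexp} for $E(q,v)$ into the integrand $L$. Recall from~\eqref{defL} that
$$
L(u,v) = \frac{v}{1-u} + (u+v)E(q,v) - v(s+1+\rho-v)\,\frac{\partial E}{\partial v}(q,v),
$$
so after the change of variable $v \mapsto yZ$ inside the outer integral, each of these three terms will produce one of the three integrals announced in the statement. The contribution of $L_0(y,yZ)/y = Z/(1-y)$ is immediate and yields the first integral $\int(1-Z)Z\,dy/(1-y)$.

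For the remaining two terms I would differentiate Equation~\eqref{eqvexp} in $v$ via the chain rule. There are two pieces: differentiating the prefactor $Q_0(v)/((U^+-U^-)P(v))$ leaves the $\Psi_0$-integral intact, while differentiating the integrand by~\eqref{derivePsi0} produces a $\Psi_1$-integral with an extra factor $-Q_0(v)/(vP(v))$. Thus
$$
\frac{\partial E}{\partial v}(q,v) = \frac{d}{dv}\!\left(\frac{Q_0(v)}{(U^+-U^-)P(v)}\right)\!\int_0^{U^-}\!\!\Psi_0\,\frac{d\xi}{1-\xi}
\;-\;\frac{Q_0(v)^2}{v(U^+-U^-)P(v)^2}\int_0^{U^-}\!\!\Psi_1\,\frac{d\xi}{1-\xi}.
$$
Grouping the $\Psi_0$-contributions from $(u+v)E(q,v)$ and from $-v(s+1+\rho-v)\partial_v E(q,v)$ yields exactly the announced $L_1(u,v)$, while the $\Psi_1$-contribution comes solely from the second piece above and yields $L_2(v)$, upon using Vieta's relation $s+1+\rho = U^+ + U^- - q$ (from the coefficients of $P$ in~\eqref{defPTheta}) to rewrite the prefactor $(s+1+\rho-v)$ as $(U^++U^--q-v)$.

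The main obstacle is purely algebraic: verifying that the $\Psi_0$-coefficient simplifies into the stated form
$$
\frac{1}{U^+-U^-}\!\left(u\,\frac{Q_0(v)}{P(v)} + v\,\frac{Q_1(v)}{P(v)^2}\right).
$$
Equivalently, one must check the identity
$$
v\,\frac{Q_1(v)}{P(v)^2} \;=\; v\,\frac{Q_0(v)}{P(v)} - v(s+1+\rho-v)\,\frac{d}{dv}\!\left(\frac{Q_0(v)}{P(v)}\right),
$$
which after clearing $P(v)^2$ becomes a polynomial identity $Q_1(v) = Q_0(v)P(v) - (s+1+\rho-v)N(v)$, with $N(v) = qv^2 - 2U^+U^-\,v + U^+U^-(U^++U^--q)$ coming from the derivative $(Q_0/P)'$. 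Expanding both sides using $Q_0(v) = U^+U^- - qv$ and Vieta's formulas gives a cubic whose $v^3$ terms cancel and whose remaining coefficients match, coefficient by coefficient, the quadratic $Q_1(v) = (q^2-U^+U^-)v^2 + 2U^+U^-(U^++U^--2q)v - U^+U^-((U^++U^--q)^2 - U^+U^-)$ displayed in the proposition. Once this identity is in hand, one simply substitutes $u \leadsto y$, $v \leadsto yZ(u,v/u;y)$ in the resulting expressions and divides by $y$ to complete the derivation of~\eqref{Fuv}.
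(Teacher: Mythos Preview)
Your proposal is correct and follows exactly the approach of the paper, which merely states that ``by using the expression of $E(q,v)$ given by Equation~\eqref{eqvexp}, Equation~\eqref{Fuv} follows after some algebra, where we use Equation~\eqref{derivePsi0}.'' You have simply filled in that algebra: splitting $L$ into its $L_0$-part and the $E(q,\cdot)$-part, applying the chain rule to~\eqref{eqvexp} (prefactor derivative plus~\eqref{derivePsi0} for the integrand), and then verifying the polynomial identity for $Q_1$ via Vieta's relations --- all of which is precisely the computation the paper leaves implicit.
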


\begin{proof}
The function $L(u,v)$ is defined by
$$
 L(u,v) = \frac{v}{1-u}+ (u+v)E(q,v) -v(1+\rho+s-v)\frac{dE}{dv}(q,v).
 $$
By using the expression of $E(q,v)$ given by Equation~\eqref{eqvexp}, Equation~\eqref{Fuv} follows after some algebra, where we use Equation~\eqref{derivePsi0}.
\end{proof}

In the next section, we use the expression of $F(u,v)$ to express the Laplace transform of the batch sojourn time $\Omega$. The mean value however seems very difficult to compute from this Laplace transform. This is why we shall derive the mean value by using the same machinery used so far to compute $F(u,v)$ but for a function appearing in the series expansion of $F(u,v)$  for $s$ in the neighbourhood of $s=0$.

\section{Batch sojourn time}
\label{Sec:Batch}

\subsection{Laplace transform}

Let us first recall that in the $M^{X]}/M/1$-PS queue, the stationary distribution of the number $N$ of jobs is given by
\begin{equation}
    \label{statdis}
\mathbbm{P}(N=n) = \left(1-\frac{\rho}{1-q}\right) \rho (\rho+q)^{n-1}\mathbbm{1}_{\{n \geq 1\}} + \left(1-\frac{\rho}{1-q}\right) \mathbbm{1}_{\{n =0\}}.
\end{equation}

\begin{prop}
 The batch sojourn time $\Omega$ in the $M^{X]}/M/1$-PS queue has the Laplace transform  given for $\Re(s) \geq 0$ by
\begin{equation}
\label{laptransfoW}
\mathbbm{E}(e^{-s \Omega}) = \frac{1-\rho-q}{q(\rho+q)}\left(\rho^2 F(s;\rho+q,q) + \frac{q^3+\rho(q s +\rho+2q(1-q))E(s;q,q)}{q +\rho+ qs -q^2}   \right),
\end{equation}
where the functions $F(s;u,v)$ and $E(s;q,q)$ are defined by Equations~\eqref{Fuv} and \eqref{eqvexp}, respectively.
\end{prop}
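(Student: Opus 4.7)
The plan is to reduce the double series defining $\mathbb{E}(e^{-s\Omega})$ to an expression involving only $E(s;q,q)$ and $F(s;\rho+q,q)$, by first isolating the two ``boundary'' generating functions $E(s;0,q)$ and $E(s;\rho+q,q)$, and then eliminating $E(s;0,q)$ via an auxiliary identity extracted from the recurrence~\eqref{E*}.

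First I would plug the stationary law~\eqref{statdis} and the geometric batch law $\mathbb{P}(B=b)=(1-q)q^{b-1}$ into
$$\mathbb{E}(e^{-s\Omega})=\sum_{n\geq 0}\sum_{b\geq 1}e^*_{n,b}(s)\,\mathbb{P}(N=n)\,\mathbb{P}(B=b).$$
Splitting off the $n=0$ term and using the definition of $E(s;u,v)$ as a bivariate generating function, a direct rearrangement yields the compact intermediate identity
$$\mathbb{E}(e^{-s\Omega})=\frac{1-q-\rho}{q(\rho+q)}\Bigl[\,q\,E(s;0,q)+\rho\,E(s;\rho+q,q)\,\Bigr].$$
Then I would substitute $E(s;\rho+q,q)=E(s;q,q)+\rho F(s;\rho+q,q)$, which is the defining relation~\eqref{defFTheta} evaluated at $u=\rho+q$. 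This immediately isolates the target $\rho^2 F(s;\rho+q,q)$ term and leaves the combination $qE(s;0,q)+\rho E(s;q,q)$ to be rewritten.

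The key lemma (and the main obstacle) is to express $E(s;0,q)$ linearly in terms of $E(s;q,q)$. For this I would return to the recurrence~\eqref{E*}, specialize to $n=0$ (the middle term drops out), multiply by $v^b$ and sum over $b\geq 1$. Using $q_m=(1-q)q^{m-1}$, the inner sum $\sum_{m\geq 1}q_m e^*_{m,b}(s)$ becomes $\frac{1-q}{q}\bigl[E_b(s;q)-e^*_{0,b}(s)\bigr]$ where $E_b(s;u)=\sum_n e^*_{n,b}(s)u^n$, and one obtains, after clearing denominators, the boundary relation
$$(\rho+q+qs-qv)\,E(s;0,v)=qv+\rho(1-q)\,E(s;q,v).$$
Evaluating at $v=q$ gives $(\rho+q+qs-q^{2})E(s;0,q)=q^{2}+\rho(1-q)E(s;q,q)$. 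The assumption $1-\rho-q>0$ together with $s\geq 0$ guarantees $\rho+q+qs-q^{2}=qs+\rho+q(1-q)>0$, so we may solve for $qE(s;0,q)$ and substitute.

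Finally I would add $\rho E(s;q,q)$ to this expression and collect terms over the common denominator $\rho+q+qs-q^{2}$. The coefficient of $E(s;q,q)$ simplifies to
$$\rho\bigl[q(1-q)+\rho+q+qs-q^{2}\bigr]=\rho\bigl[qs+\rho+2q(1-q)\bigr],$$
which yields exactly the bracketed expression in~\eqref{laptransfoW}. Multiplying by the prefactor $\frac{1-q-\rho}{q(\rho+q)}$ completes the proof. The algebra beyond the boundary identity is entirely mechanical; the only non-routine ingredient is recognizing that setting $n=0$ in~\eqref{E*} produces a closed linear equation between the two unknowns $E(s;0,\cdot)$ and $E(s;q,\cdot)$.
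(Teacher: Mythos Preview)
Your proposal is correct and follows essentially the same strategy as the paper: write $\mathbb{E}(e^{-s\Omega})$ via the stationary law as $\frac{1-\rho-q}{q(\rho+q)}\bigl[qE(s;0,q)+\rho E(s;\rho+q,q)\bigr]$, replace $E(s;\rho+q,q)$ by $E(s;q,q)+\rho F(s;\rho+q,q)$, and then eliminate the remaining boundary value $E(s;0,q)$.

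The only difference is in this last elimination. The paper substitutes $E(s;0,q)=E(s;q,q)-qF(s;0,q)$ and computes $F(s;0,v)$ by specializing the PDE~\eqref{PDEF0} at $u=0$, obtaining $F(s;0,v)=\dfrac{v+(v-s-\rho-1)E(s;q,v)}{qv-\rho-q-sq}$. You instead go back to the recurrence~\eqref{E*} at $n=0$ and sum over $b$ to obtain the algebraic boundary relation $(\rho+q+qs-qv)E(s;0,v)=qv+\rho(1-q)E(s;q,v)$ directly. The two identities are equivalent (indeed, the PDE at $u=0$ is the $v$-derivative of your relation, so the paper's route tacitly requires integrating and using $E(s;\cdot,0)=0$). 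Your derivation is slightly more elementary, since it avoids passing through the PDE and produces an algebraic identity immediately; the paper's route has the advantage of staying entirely within the PDE framework already set up. Either way the final algebra collapses to~\eqref{laptransfoW} exactly as you describe.
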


\begin{proof}
By using the stationary probability distribution given by Equation~\eqref{statdis}, we have
$$
\mathbbm{E}(e^{-s \Omega}) = \frac{1-\rho-q}{q(\rho+q)}\left(\rho E(s;\rho+q,q) + q E(s;0,q)   \right)
$$
and by using the relation between $E(s;u,v)$ and $F(s;u,v)$, we have
$$
\mathbbm{E}(e^{-s \Omega}) = \frac{1-\rho-q}{q(\rho+q)}\left(\rho^2 F(s;\rho+q,q) -q^2 F(s;0,q)  +(\rho+q)E(s;q,q) \right)
$$

The function $F(s;0,v)$ can be computed from the differential equation~\eqref{PDEF0} for $u=0$ and we find
$$
F(s;0,v) = \frac{v+(v-s-\rho-1)E(s;q,v)}{q v -\rho - q - s q}
$$
so that
$$
F(s;0,q) = \frac{q+(q-s-\rho-1)E(s;q,q)}{q^2 -\rho - q - s q}.
$$
Equation~\eqref{laptransfoW} follows.
\end{proof}

\subsection{Mean values}

In this section, we assume that the Laplace variable $s=0$ so that
\begin{equation}
    \label{defP0u}
    P(u) = (1-u)(\rho+q -u), \; C^+=\frac{-\rho}{1-q-\rho}, \; C^-=\frac{1-q}{1-\rho-q},
\end{equation}
and
$$
R(\xi) = \frac{1}{1-\xi}\left(\frac{1-\xi}{1-\frac{\xi}{\rho+q}} \right)^{C^+}.
$$

To compute the mean value of $\Omega$, we use an expansion of $F(s;u,v)$ for small $s$. Since 
$$
F(s;u,v) = \sum_{n=0}^\infty \sum_{b=1}^\infty \E(e^{-s\Omega_{n,b}}) u^n v^b
$$
and for all $n\geq 0 $ and $b\geq 1$
$$
 \E(e^{-s\Omega_{n,b}}) = 1-s \E(\Omega_{n,b}) + R_{n,b}(s)
$$
with $\lim_{s\to 0} R_{n,b}(s) =0$, we can write
$$
F(s;u,v) = F^{(0)}(u,v) + s  F^{(1)}(u,v) + \mathcal{R}_1(F)(s;u,v)
$$
where $ F^{(1)}(u,v)$ and $\mathcal{R}_1(F)(s;u,v)$ are analytic in $\mathbbm{D}\times \mathbbm{D}$ and 
$$
\lim_{s\to 0}\mathcal{R}_1(F)(s;u,v)=0
$$
for all $(u,v) \in \mathbbm{D}\times \mathbbm{D}$. In the same way, $E(s;q,v)$ can be expanded as
 $$
E(s;q,v) = E^{(0)}(v) + s  E^{(1)}(v) + \mathcal{R}_1(E)(s;v)
$$
with 
$$
\lim_{s\to 0}\mathcal{R}_1(E)(s;v)=0
$$
for all $v\in \mathbbm{D}$.

\begin{lemma}
The function $F^{(1)}(u,v)$ satisfies the differential equation
\begin{multline}
 u P(u) \, \frac{\partial F^{(1)}}{\partial u}(u,v) + 
v \left [ \rho(1-\QI) - (1+\rho-v)(u-\QI) \right ] \, 
\frac{\partial F^{(1)}}{\partial v}(u,v) \\
+ \; \left [ u(u-1-\rho) + (u-\QI)(u+v) \right ] \ F^{(1)} (u,v)  +  L^{(1)}(u,v) = 0 
\label{PDEF1}
\end{multline}
{with polynomial $P(u)$ given by Equation~\eqref{defP0u} and}
\begin{equation}
L^{(1)}(u,v) =L^{(1)}_0(u,v) + (u+v) \, E^{(1)}(\QI,v) -v (1+\rho-v)  
\frac{\partial E^{(1)}}{\partial v}(\QI,v),
\label{defL1}
\end{equation}
the term $L^{(1)}_0(u,v)$ being defined as 
$$
L^{(1)}_0(u,v) =- \frac{v(1- u v)}{(1-u)^2(1-v)^2}= \sum_{b=1}^\infty k_b^{(1)} (u) v^b
$$
with
\begin{equation}
    \label{defkb}
 k^{(1)}_b(u) = -\frac{u}{(1-u)^2} - \frac{b}{1-u}.
 \end{equation}
\end{lemma}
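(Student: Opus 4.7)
The plan is to substitute the Taylor expansion of $F(s;u,v)$ and $E(s;q,v)$ at $s=0$ into the PDE~\eqref{PDEF0} satisfied by $F$, and to identify the coefficient of $s^1$. Note that the quadratic polynomial $P(s;u)$ of~\eqref{defPTheta} itself depends on $s$; writing $P(s;u) = P(u) + s\,P^{(1)}(u)$ with $P(u) = (1-u)(\rho+q-u)$ and $P^{(1)}(u) = -(u-q)$, and similarly expanding the two other $s$-dependent coefficients $(s+1+\rho-v)(u-q)$ and $u(u-s-1-\rho)$, the $s^{0}$-equation recovers a PDE for $F^{(0)}$ (with source $L^{(0)}$), while the $s^{1}$-equation reads
\begin{multline*}
u P(u)\frac{\partial F^{(1)}}{\partial u} + v\bigl[\rho(1-q) - (1+\rho-v)(u-q)\bigr]\frac{\partial F^{(1)}}{\partial v} \\
+ \bigl[u(u-1-\rho) + (u-q)(u+v)\bigr]F^{(1)} + L^{(1)}(u,v) = 0,
\end{multline*}
where
$$
L^{(1)}(u,v) = (u+v)E^{(1)}(v) - v(1+\rho-v)\frac{\partial E^{(1)}}{\partial v}(v) + L^{(1)}_0(u,v),
$$
and where $L^{(1)}_0$ collects all the terms produced by differentiating the $s$-dependent coefficients and applying them to the zero-order quantities $F^{(0)}$ and $E^{(0)}$; more precisely,
$$
L^{(1)}_0(u,v) = -u(u-q)\frac{\partial F^{(0)}}{\partial u}(u,v) - v(u-q)\frac{\partial F^{(0)}}{\partial v}(u,v) - u\,F^{(0)}(u,v) - v\frac{\partial E^{(0)}}{\partial v}(v).
$$

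Next I would identify $F^{(0)}$ and $E^{(0)}$ explicitly. Since $e^*_{n,b}(0)= \mathbb{E}(e^{-0\cdot \Omega_{n,b}})=1$ for all $n\geq 0$ and $b\geq 1$, the definition of $E$ gives $E(0;u,v) = v/[(1-u)(1-v)]$; substituting into~\eqref{defFTheta} yields
$$
F^{(0)}(u,v) = \frac{v}{(1-q)(1-u)(1-v)}, \qquad E^{(0)}(v) = \frac{v}{(1-q)(1-v)}.
$$

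It then remains to verify by a direct computation that the displayed expression for $L^{(1)}_0(u,v)$ equals $-v(1-uv)/[(1-u)^2(1-v)^2]$. Bringing the four terms over a common denominator $(1-q)(1-u)^2(1-v)^2$, the numerator factorizes as
$$
\bigl[u(1-v) + (1-u)\bigr]\bigl[(u-q) + (1-u)\bigr] = (1-uv)(1-q),
$$
the $(1-q)$ cancels, and the stated closed form of $L^{(1)}_0$ follows. Finally, expanding $L^{(1)}_0 = \sum_{b\geq 1} k^{(1)}_b(u)\, v^b$ by means of the identity
$$
\frac{-v + uv^2}{(1-v)^2} = -\sum_{b\geq 1} b\,v^b + u\sum_{b\geq 2}(b-1)v^b,
$$
and dividing by $(1-u)^2$ produces the claimed formula~\eqref{defkb} for $k^{(1)}_b(u)$.

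The only nontrivial step is the telescoping factorization in the numerator of $L^{(1)}_0$; everything else is a mechanical expansion of the $s$-dependence in~\eqref{PDEF0} together with the evaluation $e^*_{n,b}(0)=1$.
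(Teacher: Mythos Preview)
Your proof is correct and follows essentially the same approach as the paper: expand $F$, $E$, and the $s$-dependent coefficients of the PDE~\eqref{PDEF0} to first order in $s$, identify the $s^1$-coefficient, and then evaluate the four zero-order terms comprising $L^{(1)}_0$ using the explicit formulas $F^{(0)}(u,v)=\dfrac{v}{(1-q)(1-u)(1-v)}$ and $E^{(0)}(v)=\dfrac{v}{(1-q)(1-v)}$ coming from $e^*_{n,b}(0)=1$. Your factorization of the numerator and the extraction of the coefficients $k^{(1)}_b(u)$ are both clean and slightly more explicit than the paper's presentation, but there is no substantive difference in method.
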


\begin{proof}
We know that 
$$
E(0;u,v) = \frac{v}{(1-u)(1-v)}
$$
and then
$$
F^{(0)}(u,v) = \frac{v}{(1-q)(1-u)(1-v)} \mbox{ and } E^{(0)}(q,v) =\frac{v}{(1-q)(1-v)}.
$$
Replacing $F(u,v)$ by  $ F^{(0)}(u,v) + s  F^{(1)}(u,v) + \mathcal{R}_1(F)(s;u,v)$ in  PDE~\eqref{PDEF0} and letting $s$ tend to 0  yield Equation~\eqref{PDEF1}, with
\begin{eqnarray*}
L^{(1)}_0(u,v) &=& -v \frac{\partial E^{(0)}}{\partial v}(q,v) -u F^{(0)}(u,v) - u(u-q) \frac{\partial F^{(0)}}{\partial u} -v(u-q) \frac{\partial F^{(0)}}{\partial v}\\
&=&- \frac{v(1- u v)}{(1-u)^2(1-v)^2}= \sum_{b=1}^\infty k_b^{(1)} (u) v^b,
\end{eqnarray*}
where the coefficients $k_b^{(1)}$ are defined by Equation~\eqref{defkb}.
\end{proof}

From the differential equation~\eqref{PDEF1}, we can use the analyticity condition at point 0 to compute function $E^{(1)}(q,v)$.

\begin{corol}
The function $E^{(1)}(q,v)$ is given by
\begin{multline}
E^{(1)}(q,v) = 
    \label{e1qvexp}
 \frac{Q_0(v)}{(U^+-U^-)P(v)}  \\  \int_0^{\rho+q}\left(  \Psi_0(\Theta(x (1-\xi)  R(\xi)X(v))) - \Psi_0(\Theta(x R(\xi)X(v)))    \right) \frac{(q-\xi)d \xi}{(\rho+q-\xi)(1-\xi)^2},
\end{multline}
where the function $\Psi_0(t)$ is defined by Equation~\eqref{defPsi0} and polynomial $Q_0(v)$ by Equation~\eqref{defQ0} and where it is implicitly assumed that the Laplace variable $s=0$.
\end{corol}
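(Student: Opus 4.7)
The plan is to apply, to $F^{(1)}$ and its PDE~\eqref{PDEF1}, the same method used in Section~\ref{Sec:Resol} to derive~\eqref{eqvexp} from the PDE~\eqref{PDEF0} of Proposition~\ref{Corollary1}. All ingredients $P$, $U^\pm$, $C^\pm$, $R$, $X$, $Z$ are understood to be specialised to $s=0$ via~\eqref{defP0u}, so that in particular $U^-=\rho+q$ and $U^+=1$.

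First, introduce $\Phi^{(1)}(u,v)=P(u)(1-v)F^{(1)}(u,uv)$, exactly as in~\eqref{defPhi}. The PDE~\eqref{PDEF1} transforms, by the computation underlying Corollary~\ref{C1}, into a linear PDE with identical principal part to~\eqref{PDEF0bis} but with new inhomogeneity $\mathcal{L}^{(1)}(u,v)=(1-v)L^{(1)}(u,uv)/u$. The characteristics are therefore the same curves $(t,Z(u,v;t))$ of Section~\ref{Sec:Resol}; integrating $\mathcal{L}^{(1)}$ along them from $u$ to $U^-=\rho+q$ and using analyticity at $u=U^-$ to kill the integration constant yields $\Phi^{(1)}$, hence $F^{(1)}(u,v)$, in closed form modulo the still-unknown function $E^{(1)}(q,v)$ that enters $L^{(1)}$ through~\eqref{defL1}.

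Next, imposing analyticity of $F^{(1)}$ at $u=0$ (required by its definition~\eqref{defFTheta}), one obtains, exactly as in Proposition~\ref{CNCL}, an infinite lower triangular linear system for the coefficients $E^{(1)}_\ell(q)$ of $E^{(1)}(q,v)=\sum_{\ell\geq 1} E^{(1)}_\ell(q)v^\ell$. Its left-hand side coincides with that of~\eqref{syslin} evaluated at $s=0$; its right-hand side is the analogue obtained by replacing the single-pole source $v/(1-u)$ of $L_0$ with the coefficients $k_b^{(1)}(\xi)$ of $L^{(1)}_0$ given by~\eqref{defkb}. This modified system is then solved by the same \cite{ridha} machinery that produced~\eqref{eqvexp}: the splitting $k_b^{(1)}(\xi)=-\xi/(1-\xi)^2-b/(1-\xi)$ yields two contributions. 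The $-b/(1-\xi)$ term is proportional to the original source and reproduces, with opposite sign, an integrand of the form $\Psi_0(\Theta(xR(\xi)X(v)))$, while the $-\xi/(1-\xi)^2$ term generates, through the same hypergeometric inversion, the shifted integrand $\Psi_0(\Theta(x(1-\xi)R(\xi)X(v)))$. Using the identity $(q-\xi)/((\rho+q-\xi)(1-\xi)^2)=R'(\xi)/(R(\xi)(1-\xi))$ (a consequence of~\eqref{derivR}) to recognise the measure, and combining the two contributions, one obtains~\eqref{e1qvexp}.

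The main obstacle is justifying the shifted argument $x(1-\xi)R(\xi)X(v)$ appearing inside $\Theta$ in the first of the two $\Psi_0$-terms of~\eqref{e1qvexp}: this is a genuinely new feature compared with~\eqref{eqvexp} and requires a careful accounting of how the double pole $(1-u)^{-2}$ of $L^{(1)}_0$ at $u=U^+=1$, absent in the simpler source $L_0$, propagates through the coefficient-by-coefficient inversion of the linear system via the hypergeometric identities of \cite{ridha}. All remaining steps are mechanical specialisations at $s=0$ of the arguments already carried out in Section~\ref{Sec:Resol}.
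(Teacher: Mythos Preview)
Your overall approach is the same as the paper's: transform the PDE~\eqref{PDEF1} via $\Phi^{(1)}$, integrate along the (unchanged) characteristics, impose analyticity of $F^{(1)}$ at $u=0$, obtain a lower–triangular linear system for the coefficients $E^{(1)}_\ell(q)$ with the same left-hand side $Q_{b,\ell}$ as in~\eqref{syslin}, and invert via the \cite{ridha} machinery. That skeleton is correct.

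The gap is in your explanation of how the two $\Psi_0$ terms in~\eqref{e1qvexp} arise. Splitting $k_\ell^{(1)}(\xi)=-\xi/(1-\xi)^2-\ell/(1-\xi)$ and attributing the unshifted $\Psi_0(\Theta(xR(\xi)X(v)))$ to the $-\ell/(1-\xi)$ part and the shifted one to the $-\xi/(1-\xi)^2$ part does not work: after the binomial sum $\sum_{\ell=1}^b(-1)^\ell\binom{b}{\ell}k_\ell^{(1)}(z)z^{\ell-1}$, \emph{both} pieces produce $(1-z)^b$ contributions, giving the combined expression $\bigl((b-1)(1-z)^b+1\bigr)/(1-z)^2$. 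The decisive step you are missing is an integration by parts of $\int_0^{\rho+q}\frac{(b-1)(1-z)^b+1}{(1-z)^2}R(z)^b\,dz$, using $R'(z)=\frac{q-z}{P(z)}R(z)$, which simultaneously (i) extracts the overall factor $b$ needed for the \cite{ridha} inversion, (ii) produces the measure $\frac{(q-\xi)\,d\xi}{(\rho+q-\xi)(1-\xi)^2}$ appearing in~\eqref{e1qvexp}, and (iii) yields the clean factor $(1-\xi)^b-1$. It is this factor, not the naive split of $k_\ell^{(1)}$, that separates into $((1-\xi)R(\xi))^b$ and $-R(\xi)^b$: feeding each through the inversion of Proposition~3 with $R$ replaced by $(1-\xi)R$ (resp.\ left as $R$) immediately gives the shifted (resp.\ unshifted) argument inside $\Theta$. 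Your identity $R'/(R(1-\xi))=(q-\xi)/((\rho+q-\xi)(1-\xi)^2)$ is correct but is precisely what makes this integration by parts work; it is not an independent route to the measure.
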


\begin{proof}
We can write 
$$
F^{(1)}(u,v) = \sum_{b=1}^\infty F_b^{(1)}(u) v^b.
$$
By using the same arguments as for the proof of Lemma~\ref{lemFb}, we have
$$
F_b^{(1)}(u) =  \\ \int_u^{\rho+q} f_b^{(1)}(z)    \frac{ R(z)^{b}z^{b-1}}{u^b \, P(u) R(u)^b}   \, \mathrm{d}z,
$$
where we set
$$
f_b^{(1)}(z) = k^{(1)}_b(z) + b E^{(1)}_{b-1}(z) \mathbbm{1}_{\{b \geqslant 2\}} -  \left( b(1+\rho+s) -z \right)  E_b^{(1)}(\QI)  .
$$

The analyticity condition of this function at point 0 requires 
\begin{equation}
    \label{condanalytic2}
     \int_0^{\rho+q}f_b^{(1)}(z){ R(z)^{b}z^{b-1}}  \, \mathrm{d}z =0.
\end{equation}
By using the same arguments as in the proof of Lemma~\ref{fb2}, we have for $k<b$
\begin{multline}
\label{rectech2}
  \int_0^{\rho+q} E^{(1)}_{b-k}(z)R(z)^{b}z^{b-k}  dz  = -\frac{(b-k)}{k}  \int_0^{\rho+q} E^{(1)}_{b-k-1}(z)R(z)^{b}z^{b-k-1}  dz \\ +  \frac{E^{(1)}_{b-k}(q)}{k}  \int_0^{\rho+q} ((b-k)(1+\rho+s)+(k-1) z)   R(z)^{b}z^{b-k-1}  dz  \\  -\frac{1}{k}\int_0^{\rho+q} k^{(1)}_{b-k}(z) R(z)^{b}z^{b-k-1}  dz       .
\end{multline}
Equation~\eqref{condanalytic2} implies that for $b \geq 2$
\begin{multline*}
E^{(1)}_b(q) \int_0^{\rho+q} \left( b(1+\rho+s) -z \right)    R(z)^{b}z^{b-1}  dz  = b  \int_0^{\rho+q} E^{(1)}_{b-1}(z)R(z)^{b}z^{b-1}  dz \\
+\int_0^{\rho+q} k_b^{(1)}(z)R(z)^b z^{b-1}dz
\end{multline*}
and by iterating Equation~\eqref{rectech2}, we obtain for $b \geq 1$
$$
-\sum_{\ell=1}^{b} (-1)^\ell \binom{b}{\ell}   Q_{b,\ell} E^{(1)}_\ell(q) \\
= \sum_{\ell=1}^b  (-1)^\ell \binom{b}{\ell}      \int_0^{\rho+q}k^{(1)}_\ell(z) {R(z)^b}z^{\ell-1}dz,
$$
where the coefficients $Q_{b,\ell}$ are defined by Equation~\eqref{defQbl} (with $s=0$ in the present case).

By using the definition~\eqref{defkb} of the coefficients $k^{(1)}_b(u)$, we have
\begin{multline*}
 \sum_{\ell=1}^b  (-1)^\ell \binom{b}{\ell}      \int_0^{\rho+q}k^{(1)}_\ell(z) {R(z)^b}z^{\ell-1}dz \\ =  \int_0^{\rho+q} \frac{(1-z)^b(b-1)+1}{(1-z)^{2}} R(z)^b dz
 =  b    \int_0^{\rho + q} \frac{(q-z)\left( (1-z)^b-1\right)}{(\rho+q-z)(1-z)^2}   {R(z)^b} dz
 \end{multline*}
via an integration by part. We eventually find
$$
- \sum_{\ell=1}^{b} (-1)^\ell \binom{b}{\ell}   Q_{b,\ell} E^{(1)}_\ell(q) \\
= b    \int_0^{\rho + q} \frac{(q-z)\left( (1-z)^b-1\right)}{(\rho+q-z)(1-z)^2}   {R(z)^b} dz,
$$
which can be rewritten as
\begin{multline*}
 \sum_{\ell=1}^{b} (-1)^\ell \binom{b}{\ell} F(\ell-b,-b C^+,-b;x)(U^-)^{\ell+1}  E^{(1)}_\ell(q) = \\
 \frac{b \Gamma(b-bC^+)}{\Gamma(b)\Gamma(1-b C^+)}\frac{1-x}{x} x^b  \int_0^{\rho + q} \frac{(q-z)\left( (1-z)^b-1\right)}{(\rho+q-z)(1-z)^2}   {R(z)^b} dz.
 \end{multline*}
Now, by using the same arguments as for the derivation of $E(q,v)$, notably the results in \cite{ridha}, we obtain Equation~\eqref{e1qvexp}.
\end{proof}

By using PDE~\eqref{PDEF1}, we  can  compute $F^{(1)}(\rho+q,v)$.

\begin{corol}
The function $F^{(1)}(\rho+q,v)$ is given by
\begin{multline}
    \label{defF1rhoq}
 F^{(1)} (\rho+q,v) =\frac{v}{(1-q-\rho )^2}\left(1-\frac{v}{q+\rho }\right)^{\frac{1-q-2 \rho }{\rho }} 
\Omega_1(v) + \Omega_2(v)  \\  -\frac{1-v+\rho}{\rho(q-v+\rho)}E^{(1)}(q,v),
\end{multline}
 where 
 \begin{multline}
 \label{defOmega1}
\Omega_1(v) =   \frac{v}{1-q+\rho } F_1\left(\frac{1-q+\rho }{\rho },\frac{1-q-\rho }{\rho },2,\frac{1-q+2\rho}{\rho },\frac{v}{q+\rho },v\right) \\
-
 \frac{(1-v)^{\frac{-1+q}{\rho }}}{(1-q) (q+\rho )} {}_2F_1\left(\frac{1-q}{\rho },\frac{1-q-\rho }{\rho },\frac{1-q+\rho }{\rho },\frac{v (1-q-\rho )}{(1-v) (q+\rho )}\right),
 \end{multline}
 and 
 \begin{multline}
 \label{defOmega2}
 \Omega_2(v) = \\
   \frac{(1-q)(\rho+q)}{\rho^2}v^{\frac{\rho+q-1}{\rho}}(\rho+q-v)^{-\frac{2\rho+q-1}{\rho}} \int_0^v (1-\xi) \xi^{\frac{1-2\rho-q}{\rho}}(\rho+q-\xi)^{-\frac{1-q}{\rho}} E^{(1)}(q,\xi)  d\xi,
 \end{multline}
 $F_1(\alpha,\beta,\beta',\gamma,x,x')$ and ${}_2F_1 (a,b,c,z)$ denoting Appell function of the first kind \cite{schloesser} and  Gauss Hypergeometric function \cite{Abramowitz}, respectively.
\end{corol}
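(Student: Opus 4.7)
The key observation is that $u=\rho+q$ is a root of $P(u)=(1-u)(\rho+q-u)$, so $P(\rho+q)=0$. Substituting $u=\rho+q$ into the PDE~\eqref{PDEF1} therefore kills the $\partial_u F^{(1)}$ term and leaves a \emph{first-order linear ODE} in the single variable $v$ for $F^{(1)}(\rho+q,v)$. A direct computation gives
\[
-v\rho(q+\rho-v)\,\frac{dF^{(1)}(\rho+q,v)}{dv}
+\bigl[(\rho+q)(\rho+q-1)+\rho v\bigr]F^{(1)}(\rho+q,v)
+L^{(1)}(\rho+q,v)=0.
\]
The plan is to solve this ODE by the integrating-factor method and then massage the resulting quadratures into the stated form.

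Partial-fraction decomposition of the coefficient of $F^{(1)}$ divided by $v\rho(q+\rho-v)$ gives residues $(\rho+q-1)/\rho$ at $v=0$ and $(q+2\rho-1)/\rho$ at $v=q+\rho$. Hence the integrating factor is
\[
\mu(v)=v^{(1-\rho-q)/\rho}(q+\rho-v)^{(q+2\rho-1)/\rho},
\]
and the analytic solution, with the integration constant fixed by regularity of $F^{(1)}(\rho+q,\cdot)$ at $v=0$, reads
\[
F^{(1)}(\rho+q,v)=\frac{1}{\mu(v)}\int_0^v \frac{\mu(\xi)\,L^{(1)}(\rho+q,\xi)}{\xi\rho(q+\rho-\xi)}\,d\xi.
\]
The prefactor $1/\mu(v)$ matches (up to the factor $(q+\rho)^{(q+2\rho-1)/\rho}$) the expression $v^{(\rho+q-1)/\rho}(\rho+q-v)^{-(2\rho+q-1)/\rho}$ appearing in $\Omega_2(v)$, as well as the $(1-v/(q+\rho))^{(1-q-2\rho)/\rho}$ factor multiplying $\Omega_1(v)$.

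Next I would split $L^{(1)}(\rho+q,\xi)$ into its three pieces coming from (\ref{defL1}):
the explicit driving term $L_0^{(1)}(\rho+q,\xi)=-\xi(1-(\rho+q)\xi)/[(1-\rho-q)^2(1-\xi)^2]$,
the multiplicative term $(\rho+q+\xi)E^{(1)}(q,\xi)$, and
the derivative term $-\xi(1+\rho-\xi)\partial_v E^{(1)}(q,\xi)$.
For the derivative term I would \emph{integrate by parts}: the boundary contribution at $\xi=v$ (after division by $\mu(v)$) produces precisely the third summand $-\tfrac{1+\rho-v}{\rho(q+\rho-v)}E^{(1)}(q,v)$ of~\eqref{defF1rhoq} (the boundary term at $\xi=0$ vanishes because $E^{(1)}(q,0)=0$), while the remaining integral combines with the $(\rho+q+\xi)E^{(1)}(q,\xi)$ contribution. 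A careful simplification of $\tfrac{d}{d\xi}[\mu(\xi)(1+\rho-\xi)/(\rho(\xi-q-\rho))]+(\rho+q+\xi)\mu(\xi)/[\xi\rho(q+\rho-\xi)]$ should collapse to the single density $(1-\xi)\,\xi^{(1-2\rho-q)/\rho}(\rho+q-\xi)^{-(1-q)/\rho}$ (times a constant), yielding exactly $\Omega_2(v)$.

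The remaining piece, coming from $L_0^{(1)}$, is
\[
\frac{1}{(1-\rho-q)^2\mu(v)}\int_0^v \frac{\mu(\xi)(1-(\rho+q)\xi)}{\rho(q+\rho-\xi)(1-\xi)^2}\,d\xi,
\]
an integral of Euler type whose integrand is a product of powers of $\xi$, $(1-\xi)$, $(q+\rho-\xi)$ and a linear factor $1-(\rho+q)\xi$. Splitting $1-(\rho+q)\xi$ so as to isolate the two linear factors $(1-\xi)$ and $(1-v\xi/(q+\rho))$ expected in Appell's representation
$F_1(\alpha,\beta,\beta',\gamma;x,x')=\tfrac{\Gamma(\gamma)}{\Gamma(\alpha)\Gamma(\gamma-\alpha)}\int_0^1 t^{\alpha-1}(1-t)^{\gamma-\alpha-1}(1-xt)^{-\beta}(1-x't)^{-\beta'}dt$,
after rescaling $\xi=v\,t$, should produce the Appell $F_1$ term in~\eqref{defOmega1}; the complementary part, once the factor $1-\xi$ is used to reduce one of the exponents by one, should reduce to a Gauss hypergeometric ${}_2F_1$ via Euler's integral representation, giving the second summand of $\Omega_1(v)$. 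Finally, multiplication by $v/[(1-q-\rho)^2]\cdot(1-v/(q+\rho))^{(1-q-2\rho)/\rho}$, which is exactly $1/\mu(v)$ times an explicit constant, recovers the first summand of~\eqref{defF1rhoq}.

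\textbf{Main obstacle.} The conceptual structure (ODE, integrating factor, integration by parts) is routine; the real work is the last step, namely matching the Euler-type quadrature arising from $L_0^{(1)}$ to the specific combination of an Appell $F_1$ and a Gauss ${}_2F_1$ written in~\eqref{defOmega1}. The difficulty is purely in bookkeeping: choosing the right rescaling of the integration variable and the right splitting of the numerator $1-(\rho+q)\xi$ so that the parameters $(1-q+\rho)/\rho$, $(1-q-\rho)/\rho$, $2$, $(1-q+2\rho)/\rho$ and the two base points $v/(q+\rho)$ and $v$ of the Appell function, as well as the ${}_2F_1$ parameters and its argument $v(1-q-\rho)/[(1-v)(q+\rho)]$, appear in exactly the stated form.
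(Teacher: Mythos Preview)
Your proposal is correct and follows essentially the same route as the paper: specialize the PDE at $u=\rho+q$ to get a first-order ODE in $v$, solve with the integrating factor $\mu(v)=v^{(1-\rho-q)/\rho}(q+\rho-v)^{(q+2\rho-1)/\rho}$ and $\kappa=0$, integrate by parts in the $\partial_v E^{(1)}$ piece to extract the boundary term $-\tfrac{1+\rho-v}{\rho(q+\rho-v)}E^{(1)}(q,v)$, and combine the residual with the $(\rho+q+\xi)E^{(1)}$ piece to obtain $\Omega_2(v)$. The only deviation is in the last step: rather than splitting $1-(\rho+q)\xi$ to land directly on one $F_1$ and one ${}_2F_1$, the paper first writes the $L_0^{(1)}$ quadrature as a combination of \emph{two} Appell $F_1$ functions (parameters $\alpha=(1-q+\rho)/\rho$ and $\alpha=(1-q)/\rho$ respectively) and then invokes an Appell-function identity to reduce the second $F_1$ to the stated ${}_2F_1$ with argument $\tfrac{v(1-q-\rho)}{(1-v)(q+\rho)}$ and prefactor $(1-v)^{(q-1)/\rho}$---this is what resolves the ``bookkeeping obstacle'' you flagged.
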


\begin{proof}
Setting $u=\rho+q$ in PDE~\eqref{PDEF1}, we obtain since $P(\rho+q)=0$
\begin{multline*}
   -\rho v (\rho+q-v ) \, 
\frac{dF^{(1)}}{\partial v}(\rho+q,v) \\
+ \; \left( (\rho+q)(\rho+q-1) +\rho v \right) \ F^{(1)} (\rho+q,v)  +  L^{(1)}(\rho+q,v) = 0 .
\end{multline*}
A direct integration yields for $|v|<\rho+q$,
\begin{multline*}
 F^{(1)} (\rho+q,v) = \kappa v^{\frac{\rho+q-1}{\rho}}(\rho+q-v)^{-\frac{2\rho+q-1}{\rho}} \\ +\frac{1}{\rho}v^{\frac{\rho+q-1}{\rho}}(\rho+q-v)^{-\frac{2\rho+q-1}{\rho}} \int_0^v \xi^{\frac{1-2\rho-q}{\rho}}(\rho+q-\xi)^{-\frac{1-\rho-q}{\rho}} L^{(1)}(\rho+q,\xi) d\xi
\end{multline*}
for some integration constant $\kappa$. Because the function $F^{(1)}(\rho+q,v)$ has to be analytic in the neighbourhood of $v=0$ and since $\rho+q<1$, $\kappa$ must be null and then 
\begin{multline*}
 F^{(1)} (\rho+q,v) = \frac{1}{\rho}v^{\frac{\rho+q-1}{\rho}}(\rho+q-v)^{-\frac{2\rho+q-1}{\rho}} \\  \left(  \int_0^v \xi^{\frac{1-2\rho-q}{\rho}}(\rho+q-\xi)^{-\frac{1-\rho-q}{\rho}} L^{(1)}_0(\rho+q,\xi) d\xi -  \int_0^v f_1(\xi)  \frac{\partial E^{(1)}}{\partial v}(q,\xi)  d\xi \right. 
\\ \left. +  \int_0^v (\rho+q+\xi) \xi^{\frac{1-2\rho-q}{\rho}}(\rho+q-\xi)^{-\frac{1-\rho-q}{\rho}} E^{(1)}(q,\xi)  d\xi \right),
\end{multline*}
where
$$
f_1(\xi) = (1+\rho-\xi)\xi^{\frac{1-\rho-q}{\rho}}(\rho+q-\xi)^{-\frac{1-\rho-q}{\rho}}.
$$
An integration by parts then yields
\begin{multline*}
 F^{(1)} (\rho+q,v) = \frac{1}{\rho}v^{\frac{\rho+q-1}{\rho}}(\rho+q-v)^{-\frac{2\rho+q-1}{\rho}} \\  \left(  \int_0^v \xi^{\frac{1-2\rho-q}{\rho}}(\rho+q-\xi)^{-\frac{1-\rho-q}{\rho}} L^{(1)}_0(\rho+q,\xi) d\xi - f_1(v) E^{(1)}(q,v) \right. 
\\ \left. + \frac{(1-q)(\rho+q)}{\rho} \int_0^v (1-\xi) \xi^{\frac{1-2\rho-q}{\rho}}(\rho+q-\xi)^{-\frac{1-q}{\rho}} E^{(1)}(q,\xi)  d\xi \right),
\end{multline*}

The term
\begin{multline*}
\frac{1}{\rho}v^{\frac{\rho+q-1}{\rho}}(\rho+q-v)^{-\frac{2\rho+q-1}{\rho}}   \int_0^v \xi^{\frac{1-2\rho-q}{\rho}}(\rho+q-\xi)^{-\frac{1-\rho-q}{\rho}} L^{(1)}_0(\rho+q,\xi) d\xi =  \\ 
-\frac{1}{\rho(1-\rho-q)^2}v^{\frac{\rho+q-1}{\rho}}(\rho+q-v)^{-\frac{2\rho+q-1}{\rho}}   \int_0^v \xi^{\frac{1-\rho-q}{\rho}}(\rho+q-\xi)^{-\frac{1-\rho-q}{\rho}}\frac{1-(\rho+q)\xi}{(1-\xi)^2} d\xi
\end{multline*}
can be written as
\begin{multline*}
   \frac{1}{(1-\rho-q)^2} \frac{v}{\rho+q}\left(1-\frac{v}{\rho+q}\right) ^{\frac{1-2\rho-q}{\rho}}  \\ \left(\frac{v(\rho+q)}{1-q+\rho} F_1\left(\frac{1-q+\rho }{\rho },\frac{1-q-\rho }{\rho },2,\frac{1+2\rho -q}{\rho },\frac{v}{q+\rho },v\right)  \right. \\
\left.    -\frac{1}{1-q} F_1\left(\frac{1-q }{\rho },\frac{1-q-\rho }{\rho },2,\frac{1+\rho -q}{\rho },\frac{v}{q+\rho },v\right)\right).
\end{multline*}
 By using identities satisfied by Appell functions \cite{schloesser}, the above quantity is equal to 
\begin{multline*}
\frac{v}{(1-q-\rho )^2}\left(1-\frac{v}{q+\rho }\right)^{\frac{1-q-2 \rho }{\rho }} \\
\left(\frac{v}{1-q+\rho } F_1\left(\frac{1-q+\rho }{\rho },\frac{1-q-\rho }{\rho },2,\frac{1-q+2\rho}{\rho },\frac{v}{q+\rho },v\right) \right.\\
-
\left. \frac{(1-v)^{\frac{-1+q}{\rho }}}{(1-q) (q+\rho )}{}_2F_1 \left(\frac{1-q}{\rho },\frac{1-q-\rho }{\rho },\frac{1-q+\rho }{\rho },\frac{v (1-q-\rho )}{(1-v) (q+\rho )}\right)\right).
\end{multline*}
Equation~\eqref{defF1rhoq} then follows.
\end{proof}

By using the Laplace transform given by Equation~\eqref{laptransfoW}, we can compute the mean value   of random variable $\Omega$.

\begin{prop}
The mean value of the batch sojourn time $\Omega$ is given by
\begin{equation}
    \label{meanOmega}
    \E(\Omega) =  \frac{1-\rho-q}{q(q+\rho)} \left( \frac{(1-q)^2(\rho+q)}{\rho+q-q^2} E^{(1)}(q,q) + \frac{q^3}{(1-q)(\rho+q-q^2)} -  \omega \right),
\end{equation}
where $\omega=\rho^2( \Omega_1(q)+\Omega_2(q))$ with the functions $\Omega_1(v)$ and $\Omega_2(v)$ being defined by Equations~\eqref{defOmega1} and \eqref{defOmega2}.
\end{prop}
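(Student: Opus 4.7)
The plan is to compute $\E(\Omega) = -\frac{d}{ds}\E(e^{-s\Omega})\big|_{s=0}$ by Taylor-expanding the Laplace transform identity \eqref{laptransfoW} to first order in $s$. I would plug in the expansions
\begin{align*}
F(s;\rho+q,q) &= F^{(0)}(\rho+q,q) + s F^{(1)}(\rho+q,q) + o(s),\\
E(s;q,q) &= E^{(0)}(q) + s E^{(1)}(q,q) + o(s),
\end{align*}
where $F^{(0)}(u,v) = v/[(1-q)(1-u)(1-v)]$ and $E^{(0)}(v) = v/[(1-q)(1-v)]$ come from the proof of the lemma preceding the previous corollary, together with the first-order expansion of the rational factor $(q+\rho+qs-q^2)^{-1}$. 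Writing $\E(e^{-s\Omega}) = c_0 + c_1 s + o(s)$, the desired quantity is $\E(\Omega) = -c_1$.

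The zero-th order coefficient $c_0$ must equal $1$ by definition of the Laplace transform; verifying this explicitly reduces to the elementary identity $\rho^2 + (1-\rho-q)(\rho+q-q^2) = (\rho+q)(1-q)^2$, which serves both as a sanity check and as a tool reused in the rest of the computation. Extracting the linear coefficient $c_1$ then produces three kinds of contributions. The first comes from the $\rho^2 F^{(1)}(\rho+q,q)$ term, into which I substitute the closed form \eqref{defF1rhoq} at $v=q$, using that $\rho+q-v|_{v=q} = \rho$; this delivers the quantity $\omega = \rho^2(\Omega_1(q)+\Omega_2(q))$ once the $v$-dependent prefactors in front of $\Omega_1$ and $\Omega_2$ are absorbed, together with an $E^{(1)}(q,q)$ piece coming from the last summand of \eqref{defF1rhoq}. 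The second comes from differentiating the numerator $\rho(qs+\rho+2q(1-q))E(s;q,q)$ at $s=0$, which produces both a further $E^{(1)}(q,q)$ coefficient proportional to $\rho(\rho+2q(1-q))/(\rho+q-q^2)$ and a constant term proportional to $\rho q E^{(0)}(q)/(\rho+q-q^2)$. The third is the Leibniz derivative $-N(0)M'(0)/M(0)^2$ of the rational factor, which simplifies cleanly via the same identity used in the sanity check.

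The final step is algebraic consolidation. The several $E^{(1)}(q,q)$ contributions must be placed over the common denominator $\rho+q-q^2$ and shown to combine into the coefficient $(1-q)^2(\rho+q)/(\rho+q-q^2)$ that appears in \eqref{meanOmega}, while the remaining $s$-independent rational pieces must collapse to the single fraction $q^3/[(1-q)(\rho+q-q^2)]$. Multiplying by the global prefactor $(1-\rho-q)/[q(\rho+q)]$ and by $-1$ then yields \eqref{meanOmega}. The main obstacle is precisely this final bookkeeping: the $E^{(1)}(q,q)$-dependence enters the linear coefficient through three distinct channels, and showing that the $E^{(1)}$-free pieces collapse to a single rational term requires reusing exactly the cancellation identity that already forced $c_0 = 1$.
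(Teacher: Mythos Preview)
Your proposal is correct and follows essentially the same route as the paper: expand \eqref{laptransfoW} to first order in $s$, insert the known values $F^{(0)}(\rho+q,q)=q/[(1-q)^2(1-\rho-q)]$ and $E^{(0)}(q)=q/(1-q)^2$, substitute \eqref{defF1rhoq} for $F^{(1)}(\rho+q,q)$, and collect terms. One small imprecision: the Leibniz piece $-N(0)M'(0)/M(0)^2$ involves only $E^{(0)}$, not $E^{(1)}$, so the $E^{(1)}(q,q)$ coefficient actually arises from two channels---the last summand of \eqref{defF1rhoq} and the product-rule term $\rho(\rho+2q(1-q))E^{(1)}(q,q)/(\rho+q-q^2)$---which combine over the common denominator $\rho+q-q^2$ to the stated factor $(1-q)^2(\rho+q)/(\rho+q-q^2)$.
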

\begin{comment}
$$
\omega_1=  \frac{(1-q)(\rho+q)}{\rho}\left(\frac{\rho}{q}\right)^{\frac{1-q-\rho}{\rho}} \int_0^q (1-\xi) \xi^{\frac{1-2\rho-q}{\rho}}(\rho+q-\xi)^{-\frac{1-q}{\rho}} E^{(1)}(q,\xi)  d\xi
$$
and
\begin{multline*}
    \omega_2 = \frac{q \rho^2}{(1-q-\rho )^2}\left(\frac{\rho}{q+\rho }\right)^{\frac{1-q-2 \rho }{\rho }} \\
\left(\frac{q}{1-q+\rho } F_1\left(\frac{1-q+\rho }{\rho },\frac{1-q-\rho }{\rho },2,\frac{1-q+2\rho}{\rho },\frac{q}{q+\rho },q\right) \right.\\
-
\left. \frac{(1-q)^{\frac{-1+q}{\rho }}}{(1-q) (q+\rho )} {}_2F_1\left(\frac{1-q}{\rho },\frac{1-q-\rho }{\rho },\frac{1-q+\rho }{\rho },\frac{q (1-q-\rho )}{(1-q) (q+\rho )}\right)\right) .
\end{multline*}
\end{comment}

\begin{proof}
By using the series expansion of $F(u,v)$ and $E(q,v)$ in variable $s$ in Equation~\eqref{laptransfoW} and letting $s$ tend to 0, we obtain
\begin{multline*}
    \E(\Omega) = - \frac{1-\rho-q}{q(q+\rho)} \left(\rho^2 F^{(1)}(\rho+q,q)-\frac{q^3}{(1-q)(\rho+q-q^2)} \right. \\ \left. +\frac{\rho(\rho+2q(1-q))}{q+\rho-q^2} E^{(1)}(q,q)  \right),
\end{multline*}

By the expression~\eqref{defF1rhoq} of $F^{(1)}(\rho+q,v)$, we have for $v=q$
\begin{multline*}
 F^{(1)} (\rho+q,q) = -\frac{1-q+\rho}{\rho^2}E^{(1)}(q,q) + \frac{q}{(1-q-\rho )^2}\left(\frac{\rho}{q+\rho }\right)^{\frac{1-q-2 \rho }{\rho }} \\
\left(\frac{q}{1-q+\rho } F_1\left(\frac{1-q+\rho }{\rho },\frac{1-q-\rho }{\rho },2,\frac{1-q+2\rho}{\rho },\frac{q}{q+\rho },q\right) \right.\\
-
\left. \frac{(1-q)^{\frac{-1+q}{\rho }}}{(1-q) (q+\rho )} {}_2F_1\left(\frac{1-q}{\rho },\frac{1-q-\rho }{\rho },\frac{1-q+\rho }{\rho },\frac{q (1-q-\rho )}{(1-q) (q+\rho )}\right)\right)  \\
+  \frac{(1-q)(\rho+q)}{\rho^3}\left(\frac{\rho}{q}\right)^{\frac{1-q-\rho}{\rho}} \int_0^q (1-\xi) \xi^{\frac{1-2\rho-q}{\rho}}(\rho+q-\xi)^{-\frac{1-q}{\rho}} E^{(1)}(q,\xi)  d\xi.
\end{multline*}
Equation~\eqref{meanOmega} easily follows.
\end{proof}

In spite of the apparent complexity of the expression of $\E(\Omega)$, numerical values can be obtained  by using computing systems such as Mathematica.

\begin{comment}
\begin{table}[hbtp]
\begin{tabular}{cccc} 
\hline
mean job &	mean batch& $q$	&$\rho^*$\\ \hline
2.8491 &	3.0543&	0.3&	0.5 \\
3.3163 &3.6623	& 	0.4& 	0.5 \\
4.0126 & 4.5951	 &	0.5	& 0.5 \\
9.9698 &	 13.3088	 & 0.8 &	0.5\\
\hline
7.1579 &	8.1252 &	0.3 &	0.8 \\
8.2916 &	9.9280 & 	0.4& 	0.8 \\
10.0019 &	12.7173 &	0.5	& 0.8 \\
25.0974 &	42.1318	 & 0.8 &	0.8 \\
\hline
 14.1056 &16.3858	&	0.3&	0.9 \\
16.5664 &20.4420	& 	0.4& 	0.9 \\
20.5566 &	27.2760 &	0.5	& 0.9 \\
50.7872 &	93.0302	 & 0.8 &	0.9 \\
\hline
\end{tabular}
\end{table}
\end{comment}

\section{Conclusion}
\label{Sec:Conclusion}

By conditioning on the number of jobs in the system as well as the number of jobs in a tagged job, we have established recurrence relations between the conditional sojourn times of a tagged job. These recurrence relations have been used to establish a PDE for an associated bivariate generating function. All the complexity in the analysis comes from the fact that this PDE involves an unknown  generating function, whose coefficients satisfy a lower triangular linear system involving hypergeometric polynomials. The resolution of this linear system is performed in \cite{ridha}, which is by itself a contribution to the abundant literature on infinite triangular linear systems.

The resolution of the linear system allows us to compute the Laplace transform  of the sojourn time of a tagged batch and subsequently the mean value. The analysis can be continued to derive the tail of the sojourn time distribution \cite{asymp}.

%\subsection{Asymptotic analysis}

%%%%%%%%%%%%%%%%%%%%%%%%%%%%%%%%%%%%%%%%%%%%%%%%%%%
\bibliographystyle{plain}
\bibliography{biblio}
%%%%%%%%%%%%%%%%%%%%%%%%%%%%%%%%%%%%
\appendix

%%%%%%%%%%%%%%%%%%%%%%%%%%%%%%%%%%%%%%%%%%%%%%%%%%%%%
\section{Proof of Proposition~\ref{CNCL}}
\label{App3}
%%%%%%%%%%%%%%%%%%%%%%%%%%%%%%%%%%%%%%%%%%%%%%%%%%%%%
Define the generating functions $E_b(u)$ for $b \geq 1$ and $|u|<1$ by
\begin{equation}
E_b(s,u) = \sum_{n=0}^\infty e_{n,b}^*(s) u^n
\qquad s > 0,
\label{defLaplEb}
\end{equation}
and the related functions
\begin{equation}
F_b(s,u) = 
\left\{
\begin{array}{ll}
\displaystyle \frac{E_b(s,u) - E_b(s,\QI)}{u-\QI}, \quad \quad 
s \geqslant 0, \; u \in \mathbb{D} \setminus \{\QI\},
\\ \\
\displaystyle \displaystyle \frac{\partial E_b}{\partial u}(s,\QI), \quad \quad \quad \quad \quad \quad
s \geqslant 0, \; u = \QI.
\end{array} \right.
\label{defFbTheta}
\end{equation}
In the rest of this section, we omit to specify the Laplace variable $s$ as argument of functions.

\begin{lemma}
\label{lemFb}
{For $b \geqslant 1$, function $F_b^*$ can be expressed in terms of 
$E_{b-1}^*$ by}
\begin{multline}
F_b(u) = \\ \int_u^{U^-} \left( \frac{ \mathbbm{1}_{\{b=1\}}}{1-z} + b E_{b-1}(z) \mathbbm{1}_{\{b \geqslant 2\}} -  \left( b(1+\rho+s) -z \right)  E_b(\QI)    \right)    \frac{ R(z)^{b}z^{b-1}}{u^b \, P(u) R(u)^b}   \, \mathrm{d}z
\label{Fb*}
\end{multline}
{for $u \in \mathbb{D} \setminus \{0\}$ and where $R(u)$  is defined by Equation~(\ref{defR})}.
\label{ExpressFb*}
\end{lemma}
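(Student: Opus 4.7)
The plan is to derive a first-order linear ODE for $F_b(u)$ by extracting the coefficient of $v^b$ from the PDE~(\ref{PDEF0}), and then to solve it via an explicit integrating factor built from $R$.

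Substituting $F(u,v)=\sum_{b\geq 1} F_b(u)v^b$ and $E(q,v)=\sum_{b\geq 1} E_b(q) v^b$ into~(\ref{PDEF0}) and reading off the coefficient of $v^b$, I would obtain
$$
uP(u)F_b'(u) + B_b(u) F_b(u) + b(u-q)F_{b-1}(u)\mathbbm{1}_{\{b\geq 2\}} + L_b(u) = 0,
$$
where $B_b(u) := 2u^2 - [(s+1+\rho)(b+1)+q]u + b(sq+\rho+q)$ gathers the three $F_b$-contributions of~(\ref{PDEF0}), and
$$
L_b(u) = \frac{\mathbbm{1}_{\{b=1\}}}{1-u} + \bigl(u - b(s+1+\rho)\bigr)E_b(q) + bE_{b-1}(q)\mathbbm{1}_{\{b\geq 2\}}
$$
collects the $v^b$-coefficients of the inhomogeneous term $L(u,v)$ of Proposition~\ref{Corollary1}. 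The key cleanup is then to use the defining identity $(u-q)F_{b-1}(u) = E_{b-1}(u) - E_{b-1}(q)$ coming from~(\ref{defFbTheta}) to merge the $b(u-q)F_{b-1}(u)$ and $bE_{b-1}(q)$ contributions into a single $bE_{b-1}(u)\mathbbm{1}_{\{b\geq 2\}}$, reducing the source to the bracket appearing in~(\ref{Fb*}).

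Next, I would verify that $\mu_b(u) := u^b P(u) R(u)^b$ is an integrating factor. Logarithmic differentiation and relation~(\ref{derivR}) give $\mu_b'/\mu_b = b/u + P'(u)/P(u) + b(q-u)/P(u)$, and clearing denominators yields exactly $B_b(u)/(uP(u))$; this is the one routine-but-error-prone algebraic step. Multiplying the ODE by $\mu_b(u)/(uP(u)) = u^{b-1}R(u)^b$ then casts it into divergence form,
$$
\frac{d}{du}\bigl[u^b P(u) R(u)^b F_b(u)\bigr] = -u^{b-1}R(u)^b\left[\frac{\mathbbm{1}_{\{b=1\}}}{1-u} + bE_{b-1}(u)\mathbbm{1}_{\{b\geq 2\}} - \bigl(b(s+1+\rho) - u\bigr) E_b(q)\right].
$$

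Finally, I would integrate both sides from $u$ to $U^-$. At the upper limit, $P(U^-)=0$ and $R(U^-)=0$ (the latter because $C^->1$), so the boundary value $\mu_b(U^-) F_b(U^-)$ vanishes, using that $F_b$ is analytic on $\mathbb{D}$ and $U^-\in(q,1)\subset\mathbb{D}$ by~(\ref{INEQ-U}). Dividing through by $\mu_b(u)$ then gives~(\ref{Fb*}).

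The main obstacles are purely computational: (i) the careful extraction of the $v^b$-coefficient of~(\ref{PDEF0}), in particular making sure the $v^2$-part of $v[\rho(1-q)-(s+1+\rho-v)(u-q)]F_v$ contributes $(u-q)(b-1)F_{b-1}\mathbbm{1}_{\{b\geq 2\}}$, which combines with the $v F$-contribution $(u-q)F_{b-1}\mathbbm{1}_{\{b\geq 2\}}$ to yield the advertised coefficient $b(u-q)$; and (ii) the verification of the integrating-factor identity $bP(u)+uP'(u)+bu(q-u)=B_b(u)$. The conceptual pivot is the simplification $bE_{b-1}(q)+b(u-q)F_{b-1}(u)=bE_{b-1}(u)$, which is what makes the integrand of~(\ref{Fb*}) take the clean form involving $E_{b-1}(z)$ rather than a mixture of $F_{b-1}(z)$ and $E_{b-1}(q)$.
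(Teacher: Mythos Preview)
Your proposal is correct and leads to exactly the same ODE for $F_b$ (your $B_b(u)$ coincides with the paper's $Q_b(u)$) and the same integral formula. The only difference is organizational: the paper rederives the ODE for $E_b$ directly from the recurrence~(\ref{E*}) via identity~(\ref{idtech}) and then passes to $F_b$ to cancel the $1/(u-\QI)$ singularities, whereas you extract the $v^b$-coefficient from the already-cleaned PDE~(\ref{PDEF0}) and then use $(u-\QI)F_{b-1}=E_{b-1}(u)-E_{b-1}(\QI)$ to reintroduce $E_{b-1}$; the integrating-factor/variation-of-constants step and the use of analyticity at $U^-$ to kill the homogeneous part are identical.
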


\begin{proof}
From the recurrence~\eqref{E*}, we have for $b=1$,
\begin{eqnarray*}
 (s+\rho+1) \sum_{n=0}^\infty (n+1)e^*_{n,1}(s) u^n &=& \frac{1}{1-u}  + \sum_{n=0}^\infty n e^*_{n-1,1}(s) u^n \\ 
& +&(1-q)\rho  \sum_{n=0}^\infty (n+1)\sum_{m=1}^\infty q^{m-1}e^*_{n+m,1}(s) u^n
  \end{eqnarray*}
and for $b \geq 1$
\begin{eqnarray*}
 (s+\rho+1) \sum_{n=0}^\infty (n+b)e^*_{n,b}(s) u^n &= & b  \sum_{n=0}^\infty e^*_{n,b-1}(s) u^n  + \sum_{n=0}^\infty n e^*_{n-1,b}(s) u^n \\ 
 &+& (1-q)\rho  \sum_{n=0}^\infty (n+b)\sum_{m=1}^\infty q^{m-1}e^*_{n+m,b}(s) u^n
  \end{eqnarray*}
By using identity~\eqref{idtech}, the first equation yields
\begin{multline*}
 \frac{u \, P(u)}{u-q} \, \frac{\partial E_1}{\partial u}(u) + 
\left(u-(1+\varrho+s)\right) E_1(u) +   (1-q)\rho \frac{E_1(u) -E_1(q)}{u-q} \\ -u\rho(1-q) \frac{E_1(u)-E_1(q)}{(u-q)^2} +\frac{1}{1-u} = 0
\end{multline*}
and the second one for $b \geq 2$
\begin{multline*}
 \frac{u  P(u)}{u-q} \, \frac{\partial E_b}{\partial u}(u) + 
\left(u-b(1+\varrho+s)\right) E_b(u) +   (1-q)\rho b \frac{E_b(u) -E_b(q)}{u-q} \\ -u\rho(1-q) \frac{E_b(u)-E_b(q)}{(u-q)^2} + b E_{b-1} (u) = 0.  
\end{multline*}

 Expressing each equation in terms of $F_b^*$ after (\ref{defFbTheta}) then cancels out all denominators in $1/(u-\QI)$ or $1/(u-\QI)^2$ and we obtain
\begin{multline}
u  P(u) \frac{\partial F_b}{\partial u}(u) + Q_b(u) F_b(u) =  \\
\left( b(1+\rho+s) -u \right)  E_b^*(\QI) - \frac{1}{1-u} \mathbbm{1}_{\{b=1\}} - b E_{b-1}^*(u) \mathbbm{1}_{\{b \geqslant 2\}} 
\label{FbL}
\end{multline}
after some algebraic reduction and where $Q_b(u)$ denotes the quadratic polynomial 
$$
Q_b(u) = P(u) + u^2 - b(1+\varrho+s)u + (b-1)(s\QI + \varrho + \QI).
$$

We now solve the first order differential equation (\ref{FbL}) for $F_b^*$; noting that 
$$
- \frac{Q_b(u)}{u  P(u)} = 
- \frac{b}{u} + \frac{b-1-b \, C^+}{u-U^+} + 
\frac{b-1-b \, C^-}{u-U^-}
$$
after standard algebra and the use of definition (\ref{C+-}) for constants 
$C^\pm$ together with the relation $C^+ + C^- = 1$, the homogeneous differential  associated with the ordinary differential equation~(\ref{FbL}) has the general solution $\phi_b(u)$ given by 
$$\phi_b(u) = \kappa_b  u^{-b}
\left(1-\frac{u}{U^+}\right)^{b-1-b \, C^+}\left(1-\frac{u}{U^-}\right)^{b-1-b \, C^-}$$
for any multiplicative constant $\kappa_b$; using the method of the variation of constants, the general solution to the full equation~(\ref{FbL}) is easily derived as
\begin{multline*}
F_b(u) =   \kappa^0_b u^{-b}
\left(1-\frac{u}{U^+}\right)^{b-1-b \, C^+}\left(1-\frac{u}{U^-}\right)^{b-1-b \, C^-} 
\\ + 
\int_u^{U^-} \left( \frac{ \mathbbm{1}_{\{b=1\}}}{1-z} + b E_{b-1}(z) \mathbbm{1}_{\{b \geqslant 2\}} -  \left( b(1+\rho+s) -z \right)  E_b(\QI)    \right)    \frac{ R(z)^{b}z^{b-1}}{u^b \, P(u) R(u)^b}   \, \mathrm{d}z
\end{multline*}
for all $u \in \mathbb{D}$ and some constant $\kappa_b^0$. Now, the analyticity of this solution $F_b^*$ at point $u = U^- \in \mathbb{D}$ requires $\kappa_b^0$ to be zero and expression~(\ref{Fb*}) follows. 
\end{proof}

Equation~\eqref{Fb*} is valid for $u \neq 0$ but we know by definition that this function must be analytic at point 0. This is possible if and only if
\begin{equation}
\label{condanalytic}
\int_0^{U^-} \left( \frac{ \mathbbm{1}_{\{b=1\}}}{1-z} + b E_{b-1}(z) \mathbbm{1}_{\{b \geqslant 2\}} -  \left( b(1+\rho+s) -z \right)  E_b(\QI)    \right)     R(z)^{b}z^{b-1}  \, \mathrm{d}z =0.
\end{equation}

By using Equation~\eqref{Fb*}, we can state the following lemma.

\begin{lemma}
\label{fb2}
For $k < b-1$, we have
\begin{multline}
\label{rectech}
    \int_0^{U^-} E_{b-k}(z)R(z)^{b}z^{b-k}  dz  = -\frac{(b-k)}{k}  \int_0^{U^-} E_{b-k-1}(z)R(z)^{b}z^{b-k-1}  dz \\ +  \frac{E_{b-k}(q)}{k}  \int_0^{U^-} ((b-k)(1+\rho+s)+(k-1) z)   R(z)^{b}z^{b-k-1}  dz.
\end{multline}
\end{lemma}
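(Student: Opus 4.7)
The plan is to turn (\ref{rectech}) into a single integration-by-parts identity for $I_{b,k}:=\int_0^{U^-}E_{b-k}(z)R(z)^{b}z^{b-k}\,dz$. The first step is to decompose the integrand using the very definition of $F_{b-k}$: namely $E_{b-k}(z)=E_{b-k}(q)+(z-q)F_{b-k}(z)$. This splits $I_{b,k}$ into a boundary-like contribution $E_{b-k}(q)\int_0^{U^-}R(z)^{b}z^{b-k}\,dz$ and a remainder $\int_0^{U^-}(z-q)F_{b-k}(z)R(z)^{b}z^{b-k}\,dz$, which is where the real work takes place.

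To handle the remainder, the key observation is that equation (\ref{derivR}) implies $(z-q)R(z)^{b}=-\frac{P(z)}{b}\frac{d}{dz}R(z)^{b}$. I would plug this in and integrate by parts, the boundary terms vanishing at $z=U^{-}$ because $R(U^{-})=0$ (since $C^{-}-1>0$ by (\ref{C+-})) and at $z=0$ because $z^{b-k}$ vanishes there (which uses $k\leq b-1$). This leaves an integrand of the form $\{[P(z)z^{b-k}]'F_{b-k}(z)+P(z)z^{b-k}F_{b-k}'(z)\}R(z)^{b}$, and at this point one invokes the first-order ODE (\ref{FbL}) satisfied by $F_{b-k}$ (the case $b-k\geq 2$ is exactly what the hypothesis $k<b-1$ buys us) to replace $zP(z)F_{b-k}'(z)$ by $((b-k)(1+\rho+s)-z)E_{b-k}(q)-(b-k)E_{b-k-1}(z)-Q_{b-k}(z)F_{b-k}(z)$.

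After the substitution, the coefficient of $F_{b-k}(z)$ in the remaining integrand is $[P(z)z^{b-k}]'-Q_{b-k}(z)z^{b-k-1}$. A short but essential polynomial calculation — using $P'(z)=2z-(s+1+\rho+q)$ together with the explicit form $Q_{b-k}(z)=P(z)+z^{2}-(b-k)(1+\rho+s)z+(b-k-1)(sq+\rho+q)$ from the proof of Lemma~\ref{lemFb} — collapses this to $(b-k)(z-q)z^{b-k}$. But then the factor $(z-q)F_{b-k}(z)$ reassembles into $E_{b-k}(z)-E_{b-k}(q)$, so the $F_{b-k}$-contribution reproduces $I_{b,k}$ (minus an explicit $E_{b-k}(q)$-piece) while the other terms produce $(b-k)I_{b,k+1}$ and the explicit coefficient $(b-k)(1+\rho+s)+(k-1)z$ once combined with the $k\cdot E_{b-k}(q)\int R^{b}z^{b-k}$ coming from the first split. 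Solving the resulting linear equation in $I_{b,k}$ gives (\ref{rectech}) after dividing by $k$.

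The main obstacle is precisely this polynomial reduction $[P(z)z^{b-k}]'-Q_{b-k}(z)z^{b-k-1}=(b-k)(z-q)z^{b-k}$: it is what makes the integration by parts telescope back to $I_{b,k}$ itself, rather than spawn a new unrelated object. Everything else — verifying the vanishing of boundary terms and bookkeeping the signs when collecting the $E_{b-k}(q)$-coefficients into the stated form $(b-k)(1+\rho+s)+(k-1)z$ — is routine but must be done carefully.
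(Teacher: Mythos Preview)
Your argument is correct, and it takes a genuinely different route from the paper's. The paper also starts from the split $E_{b-k}(z)=E_{b-k}(q)+(z-q)F_{b-k}(z)$, but then, instead of integrating by parts, it inserts the \emph{integral representation} (\ref{Fb*}) for $F_{b-k}$, obtains a double integral, interchanges the order of integration, and uses the primitive
\[
\int_0^{y}\frac{(z-q)R(z)^{k}}{P(z)}\,dz=\frac{1}{k}\bigl(1-R(y)^{k}\bigr)
\]
(an immediate consequence of (\ref{derivR})). The piece coming from the constant $1$ then vanishes thanks to the analyticity condition (\ref{condanalytic}), and the remaining piece gives (\ref{rectech}) directly.

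Compared with this, your approach has the pleasant feature that it needs neither the explicit integral formula (\ref{Fb*}) nor the analyticity condition (\ref{condanalytic}): the ODE (\ref{FbL}) and one integration by parts suffice. The price you pay is the polynomial identity
\[
(b-k)P(z)+zP'(z)-Q_{b-k}(z)=(b-k)\,z(z-q),
\]
which you correctly identified as the crux and which indeed holds. The paper's route replaces this algebraic verification by the one-line primitive above, so it is arguably shorter once (\ref{Fb*}) and (\ref{condanalytic}) are already in hand; your route is more self-contained. Either way the boundary terms at $z=0$ and $z=U^{-}$ vanish for the reasons you state (at $U^{-}$ one can equally well use $P(U^{-})=0$).
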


\begin{proof} 
By using the relation between $E_{b-k}(z)$  and $F_{b-k}(z)$ for $k<b-1$, we have
\begin{multline*}
    \int_0^{U^-} E_{b-k}(z)R(z)^{b}z^{b-k}  dz  = \\ \int_0^{U^-} (z-q)F_{b-k}(z)R(z)^{b}z^{b-k}  dz + E_{b-k}(q)  \int_0^{U^-} R(z)^{b}z^{b-k}  dz
\end{multline*}
By using the expression of $F_{b-k}(z)$ given by Equation~\eqref{Fb*}, we obtain
\begin{multline*}
 \int_0^{U^-} (z-q)F_{b-k}(z)R(z)^{b}z^{b-k}  dz =  \int_0^{U^-} \frac{(z-q)R(z)^k}{P(z)} \\ \int_z^{U^-} \left(( b -k) E_{b-k-1}(y ) - E_{b-k} (q)  \left( (b-k)(1+\rho+s) -y \right) \right)   R(y)^{b-k}y^{b-k-1}  dy dz.
\end{multline*}
From Equation~\eqref{derivR}, we have
$$
\int_0^{y} \frac{(z-q)R(z)^k}{P(z)} dz = \frac{1}{k}(1-R(y)^k).
$$
By Equation~\eqref{condanalytic}
\begin{multline*}
 \int_0^{U^-} (z-q)F_{b-k}(z)R(z)^{b}z^{b-k}  dz = \\ -\frac{1}{k} \int_0^{U^-}  \left(( b -k) E_{b-k-1}(y ) - E_{b-k} (q)  \left( (b-k)(1+\rho+s) -y \right) \right)  R(y)^{b}y^{b-k-1}  dy
\end{multline*}
and Equation~\eqref{rectech} follows.
\end{proof}
%%%%%%%%%%%%%%%%%%%%%%%%%%%%%%%%%%%%%%%%%%%%%%%%%%%%

Equation~\eqref{condanalytic} implies that for $b \geq 2$
$$
E_b(q) \int_0^{U^-} \left( b(1+\rho+s) -z \right)    R(z)^{b}z^{b-1}  dz  = b  \int_0^{U^-} E_{b-1}(z)R(z)^{b}z^{b-1}  dz 
$$
and by iterating Equation~\eqref{rectech}, we obtain
\begin{multline*}
\sum_{k=0}^{b-1} (-1)^k \binom{b}{k} E_{b-k}\int_0^{U^-} ((b-k)(1+\rho+s)+(k-1)z)R(z)^b z^{b-k-1}dz \\
=(-1)^{b+1} \frac{b(b-1)\ldots 2}{2.3 \ldots(b-1)}\int_0^{U^-} {R(z)^b}\frac{dz}{1-z}.
\end{multline*}
This equation can be rewritten as Equation~\eqref{syslin}.
\end{document}